\newtheorem{theorem}{Theorem}
\newtheorem{lemma}[theorem]{Lemma}
\newtheorem{conjecture}[theorem]{Conjecture}
\newtheorem{proposition}[theorem]{Proposition}
\newtheorem{corollary}[theorem]{Corollary}
\newtheorem{observation}[theorem]{Observation}
\newtheorem{example}{Example}
\newproof{proof}{Proof}
\newproof{proof-main}{Proof of Theorem~\ref{THM Main Theorem}}
\newcommand{\A}{\mathcal{A}}
\newcommand{\B}{\mathcal{B}}
\newcommand{\C}{\mathcal{C}}
\newcommand{\D}{\mathcal{D}}
\newcommand{\I}{\mathcal{I}}
\newcommand{\M}{\mathcal{M}}
\renewcommand{\S}{\mathcal{S}}
\newcommand{\avoid}{\mathrm{Av}}
\newcommand{\rigidReduction}{\mathrm{red}}
\newcommand{\involvedIn}{\preceq}
\newcommand{\stanleyWilfLimit}{s}
\begin{document}
\begin{frontmatter}

\title{Growth rates for subclasses of $\avoid (321)$.}

\author[otago]{M.~H.~Albert}
\ead{malbert@cs.otago.ac.nz}
\author[otago]{M.~D.~Atkinson}
\ead{mike@cs.otago.ac.nz}
\author[bristol]{R.~Brignall\fnref{heilb}}
\ead{robert.brignall@bris.ac.uk}
\author[sta]{N.~Ru\v{s}kuc}
\ead{nik@mcs.st-and.ac.uk}
\author[suny]{Rebecca~Smith}
\ead{rebecca@brockport.edu}
\author{J.~West}
\ead{julian@mcs.st-and.ac.uk}
\fntext[heilb]{Supported by the Heilbronn Institute for Mathematical Research.}
\address[otago]{Department of Computer Science, University of Otago} 
\address[bristol]{Department of Mathematics, University of Bristol}
\address[sta]{School of Mathematics and Statistics, University of St Andrews}
\address[suny]{Department of Mathematics, SUNY Brockport}

\begin{abstract}
Pattern classes which avoid $321$ and other patterns are shown to have the same growth rates as similar (but strictly larger) classes obtained by adding articulation points to any or all of the other patterns. The method of proof is to show that the elements of the latter classes can be represented as \emph{bounded merges} of elements of the original class, and that the bounded merge construction does not change growth rates.
\end{abstract}

\end{frontmatter}

\section{Introduction}

A pattern class is, roughly,  a collection of (finite) permutations that satisfy certain restrictions on the configurations of their elements (formal definitions can be found in the next section). For example, the collection of all permutations containing no descending subsequence of length 3 is such a class. In general to denote that a pattern class $\C$ is determined by a set of restrictions $B$ we write $\C=\avoid(B)$.  The study of such classes dates back at least to work of Knuth \cite{Knuth75}, or even further to the celebrated result of Erd\H{o}s and Szekeres \cite{ES35} that every permutation of length greater than $ad$ must include either an ascending subsequence of length $a+1$ or a descending one of length $d+1$.

Initially, research into pattern classes focussed on enumeration -- determining the number of permutations of length $n$ in a given pattern class.  An early result of this type \cite{Knuth75} was that $\avoid(231)$ and $\avoid(321)$ are both enumerated by the Catalan sequence (and by easy symmetries so also is every class $\avoid(\alpha)$ with $|\alpha|=3$).  Early hopes that $\avoid(231)$ and $\avoid(321)$ might have further properties in common have largely foundered since the discovery \cite{AMR02a} that $\avoid(231)$ contains only  countably many subclasses whilst $\avoid(321)$ contains uncountably many.  In fact $\avoid(231)$ is a very tractable class compared to $\avoid(321)$ and, in particular, there is an efficient algorithm \cite{AA03b} to enumerate $\avoid(B)$ whenever $231\in B$.  By contrast the subclasses of $\avoid(321)$ are generally impossible to enumerate exactly and so attention has turned to growth rate estimates.

Growth rate estimates have become an important way of approximating the number of permutations in a pattern class since Marcus and Tardos~\cite{MT04} proved the Stanley-Wilf conjecture that for every proper pattern class there is an exponential bound on the number of permutations of length $n$ which it contains.  Their result implies that every proper pattern class $\C$ has a \emph{growth rate} defined to be the limit superior of the $n^{\mbox{\scriptsize th}}$ root of the number of permutations in $\C$ of length $n$. Growth rates have been investigated by B\'{o}na \cite{Bona05, Bona07} who found bounds (relative to the size of the forbidden patterns) and results on what form this growth rate might take. Recently, Vatter \cite{Vatter08} has proven  that every real number greater than 2.482 occurs as the growth rate of some pattern class. Because of these results and others we shall investigate the growth rates of pattern subclasses of $\avoid(321)$
and particularly when distinct subclasses of $\avoid(321)$  have the same growth rate. 


Consider a pattern class $\C$ of the form $\avoid(321, X)$ where $X$ is some arbitrary set of permutations. Consider also $\C'=\avoid(321, X')$ where $X'$ is obtained from $X$ by adding or removing ``articulation points'' (similar to the 3 of 21354) anywhere within the patterns of $X$. The main result of this paper is that $\C$ and $\C'$  have the same growth rate. In order to prove this result we introduce a number of new concepts and constructions, including the notions of $k$-rigidity, bounded merges, and staircase decompositions, which we discuss in some generality.

The structure of the remainder of this paper is as follows: 
\begin{description}
\item[Section 2] introduces the formal definitions, and certain preliminary results concerning rigidity and growth rates.
\item[Section 3] contains the proof of the main result, divided into two cases for clarity, using staircase decompositions.
\item[Section 4] examines the distributive lattices of occurrences of 21 in a 321-avoiding permutation, and shows that every subdirect product of two chains can arise in this fashion.
\item[Section 5] concludes the paper with some further remarks, and open problems.
\end{description}

\section{Preliminaries}

A permutation $\pi \in \S_n$ is a bijective map from $[n] = \{1,2,\ldots,n\}$ to itself, and is therefore a set of ordered pairs 
\[
\{(1, \pi(1)), (2, \pi(2)), \ldots, (n, \pi(n))\}
\]
(traditionally and more frequently written as the sequence $\pi(1)\pi(2)\cdots\pi(n)$).
So, when we say $x \in \pi$ we are simply referring to some member of this set. However, it is frequently necessary to relate elements of $\pi$ either by the values of their first or second coordinates. Normally, we think of the first coordinates as lying on a horizontal axis so words and phrases such as ``precedes'', ``follows'', ``to the left of'', etc.\ refer to that ordering. Conversely words such as ``larger'', ``smaller'', ``above'' and ``below'' relate to the ordering of the second coordinate.

An \emph{involvement} or \emph{embedding} of a permutation $\alpha$ in $\pi$ is a map $f: \alpha \to \pi$ that respects both these orderings. In other words $x$ precedes (is larger than) $y$ in $\alpha$ if and only if $f(x)$ precedes (is larger than) $f(y)$ in $\pi$. In particular an embedding is necessarily injective. The composition of embeddings is an embedding and so the relation ``is involved in'' is a partial order, which will be denoted $\involvedIn$. If a subset of $\pi$ is the image of $\alpha$ under an embedding, then we say that the pattern of the subset is $\alpha$. We say that $x \in \pi$ \emph{occurs as an $i$ in an embedding of $\alpha$} (or just ``as $i$ in an $\alpha$'') if there is an embedding of $\alpha$ in $\pi$ such that $x$ is the image of the element of $\alpha$ whose \emph{second} coordinate (i.e.\ value\footnote{Why value? Because, in the usual ``one line'' notation for permutations, it is easy to identify the element of value $i$, and not necessarily so easy to identify the element at position $i$.}) is $i$. A \emph{pattern class}, or simply \emph{class} of permutations is a set of permutations closed downward under $\involvedIn$. Such a class, $\C$, can also be defined as the set of permutations which \emph{avoid}, i.e.\ do not involve, any of the elements of some set $B$ of permutations. In that case we write $\C = \avoid(B)$. If $B$ is a $\involvedIn$-antichain, then it is called the \emph{basis} of $\C$ (note that, for any set $B$, the set of minimal elements
of $B$ is an antichain and forms the basis of $\avoid(B)$). We define  the \emph{growth rate} (sometimes called the \emph{Stanley-Wilf limit}, or \emph{upper growth rate}) of $\C$:
\[
\stanleyWilfLimit (\C) = \limsup_{n \to \infty} | \C \cap \S_n |^{1/n}.
\]
As noted in the introduction, Marcus and Tardos \cite{MT04} proved that if $\C$ is a proper pattern class, then $\stanleyWilfLimit ( \C ) < \infty$.

The \emph{increasing} and \emph{decreasing} permutations of length $k$ are 
\[
\begin{array}{lcl}
\iota_k &=& \{(1,1), (2,2), (3,3), \ldots, (k,k)\} \\ 
\delta_k &=& \{(k,1), (k-1,2), (k-2,3), \ldots, (1,k)\}\\ 
\end{array}
\]
respectively. A subset of $\pi$ is called increasing (respectively decreasing) if its pattern is some increasing (decreasing) permutation.

Throughout this paper, we are primarily concerned with permutations that can be written as the union of $k$ increasing subsets for some fixed value of $k$. These permutations form a pattern class $\I_k$, whose basis is the single decreasing permutation $\delta_{k+1}$.
We say that a permutation $\pi \in \I_k$ is \emph{$k$-rigid} if every element of $\pi$ belongs to a subset whose pattern is $\delta_k$.

Suppose that $\pi \in \I_k$. We can define a decomposition of $\pi$ into increasing subsets $C_1$, $C_2$, \ldots, $C_k$ by defining, for $1 \leq t \leq k$:
\[
C_t = \left\{ x \in \pi \, : \, \parbox{6cm}{$x$ occurs as the maximum of some $\delta_t$ but not of any $\delta_{t+1}$} \right\}.
\]
This decomposition is the one produced by a greedy algorithm, which takes the elements of $\pi$ in order from right to left, and adds each successive element $x$ to the first $C_j$ of which $x$ is smaller than the current minimum. If $x \in \pi$ belongs to $C_i$ then we say that the \emph{rank} of $x$ is $i$.

\begin{lemma}
If $\pi \in \I_k$, and $x \in \pi$ occurs as an $i$ in some $\delta_k$, then the rank of $x$ is $i$. Consequently, the position of $x$ in all the $\delta_k$ to which it belongs is the same.
\end{lemma}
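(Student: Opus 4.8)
The plan is to reinterpret the hypothesis geometrically and then trap the rank from both sides. To say that $x$ occurs as an $i$ in some $\delta_k$ means there is a decreasing subsequence $D$ of length $k$ in which $x$ plays the role of the element of value $i$. Since in $\delta_k$ the element of value $i$ sits with the $k-i$ larger values to its upper-left and the $i-1$ smaller values to its lower-right, the subsequence $D$ splits as $D = L \cup \{x\} \cup R$, where $L$ consists of $k-i$ elements each larger than and to the left of $x$, and $R$ consists of $i-1$ elements each smaller than and to the right of $x$. I would carry this decomposition through the whole argument.

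First I would establish the lower bound, that the rank of $x$ is at least $i$. The set $\{x\} \cup R$ is a decreasing subsequence of length $i$ whose maximum (leftmost, largest element) is $x$, so $x$ occurs as the maximum of some $\delta_i$. Because the rank of $x$ is, by the definition of the $C_t$, the largest $t$ for which $x$ is the maximum of some $\delta_t$, this already forces the rank of $x$ to be at least $i$.

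Next I would establish the matching upper bound by a contradiction that exploits $\pi \in \I_k = \avoid(\delta_{k+1})$. Suppose $x$ were the maximum of some $\delta_{i+1}$; then there would be a decreasing subsequence $R'$ of $i$ elements, each smaller than and to the right of $x$. Concatenating $L$, then $x$, then $R'$ yields a sequence of length $(k-i)+1+i = k+1$, and it is genuinely decreasing: the elements of $L$ lie above and to the left of $x$, those of $R'$ lie below and to the right of $x$, so the two blocks occupy disjoint regions and the whole chain descends. This would be a $\delta_{k+1}$, contradicting $\pi \in \I_k$. Hence $x$ is the maximum of no $\delta_{i+1}$, the rank of $x$ is at most $i$, and combining the two bounds gives that the rank of $x$ equals $i$.

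For the ``consequently'' clause I would observe that the rank computed above is an intrinsic invariant of $x$ that makes no reference to any particular $\delta_k$: if $x$ occurs as an $i$ in one $\delta_k$ and as an $i'$ in another, then both equal the rank of $x$, so $i = i'$, and since the value-$i$ element of $\delta_k$ sits in a fixed place (the $(k+1-i)$-th from the left), $x$ occupies the same position in every $\delta_k$ to which it belongs. I expect the only delicate point to be the concatenation step, where one must confirm that gluing $L$, $x$, and $R'$ produces an honestly decreasing sequence; this rests entirely on the above-left and below-right regions relative to $x$ being disjoint, after which the rest is bookkeeping about which role each element plays inside a decreasing pattern.
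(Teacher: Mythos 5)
Your proof is correct and follows essentially the same argument as the paper: the paper writes the chosen $\delta_k$ in one-line notation as $AxB$, notes that $xB$ witnesses $x$ as the maximum of a $\delta_i$ (your set $\{x\}\cup R$), and rules out $x$ being the maximum of any $\delta_{i+1}$, say $xC$, because $AxC$ would form a $\delta_{k+1}$ (your concatenation $L$, $x$, $R'$). Your write-up merely makes explicit the bookkeeping the paper leaves implicit, namely that the rank is the largest $t$ for which $x$ tops a $\delta_t$ and that the glued sequence is genuinely decreasing.
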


\begin{proof}
Choose a $\delta_k$ in which $x$ occurs as $i$, and write it in one line notation as $A x B$ (so $A$ is a decreasing sequence of length $k-i$ and $B$ a decreasing sequence of length $i-1$). Then $x$ occurs as the maximum of the $\delta_i$, $x B$. It cannot occur as the maximum of any $\delta_{i+1}$, $x C$, because then $A x C$ would be a $\delta_{k+1}$ in $\pi$.
\end{proof}

It follows that if $\rho$ is $k$-rigid, then any embedding of $\rho$ in $\pi \in \I_k$ must preserve the ranks of the elements of $\rho$, as it preserves sets whose pattern is $\delta_k$.

If two elements of a permutation coincide or form a $12$ pattern, then it makes sense to speak of their \emph{infimum} -- it is simply the smaller and earlier of the two, and likewise their \emph{supremum} which is the larger and later. If $f, g : \rho \to \pi$ are two embeddings of a $k$-rigid permutation into an element of $\I_k$, then for any $x \in \rho$, the ranks of $f(x)$ and $g(x)$ are the same. Therefore $f(x)$ and $g(x)$ occur in some increasing subset of $\pi$ and hence their infimum and supremum are defined. In fact more is true:

\begin{theorem}
\label{THM Lattice Structure}
Let $\pi \in \I_k$, $\rho$ a $k$-rigid permutation, and two embeddings $f, g : \rho \to \pi$ be given. Then $I, S : \rho \to \pi$ defined for $x \in \rho$ by $I(x) = \inf (f(x), g(x))$, and  $S(x) = \sup (f(x), g(x))$ are also embeddings of $\rho$ in $\pi$. In particular, the embeddings of $\rho$ in $\pi$ form a distributive lattice.
\end{theorem}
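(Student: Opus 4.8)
The plan is to reduce both assertions --- that $I$ and $S$ are embeddings, and that the set of all embeddings forms a distributive lattice --- to a single coordinatewise computation, with rank preservation doing the essential work.

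First I would record the consequence of $k$-rigidity noted just before the statement: for every $x \in \rho$ the elements $f(x)$ and $g(x)$ have the same rank $r(x)$, so they lie in a common increasing subset $C_{r(x)}$ of $\pi$ and are therefore comparable (they coincide or form a $12$). Writing $f(x) = (a_x, b_x)$ and $g(x) = (c_x, d_x)$ in (position, value) coordinates, comparability means one of these points is weakly below and to the left of the other; consequently $I(x) = (\min(a_x, c_x), \min(b_x, d_x))$ and $S(x) = (\max(a_x, c_x), \max(b_x, d_x))$, and each of these is literally one of the two points $f(x), g(x)$, hence a genuine element of $\pi$.

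Next I would verify that $I$ preserves both orderings, the argument for $S$ being identical with $\max$ in place of $\min$. Fix $x, y \in \rho$ with $x$ preceding $y$. Since $f$ and $g$ are embeddings, $a_x < a_y$ and $c_x < c_y$, and then $\min(a_x, c_x) < \min(a_y, c_y)$, so $I(x)$ precedes $I(y)$; the value order is handled the same way using $b$ and $d$. Because the position order and the value order on $\rho$ are both linear, establishing these strict implications for every ordered pair simultaneously forces them to be reflected as well (an order-preserving injection of linear orders reflects the order), so $I$, and likewise $S$, is an embedding.

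For the lattice statement I would observe that every embedding of $\rho$ sends $x$ into the chain $C_{r(x)}$, so the set of embeddings sits inside the product of chains $\prod_{x \in \rho} C_{r(x)}$, which is a distributive lattice under the coordinatewise order. The meet and join in this product are exactly the coordinatewise $\inf$ and $\sup$, that is, $I$ and $S$; the previous paragraphs show that the embeddings are closed under both. Hence they form a sublattice of a distributive lattice and are therefore themselves distributive. I expect the only genuine subtlety to lie in the first paragraph --- establishing that $I(x)$ and $S(x)$ are actual points of $\pi$ with the clean coordinatewise form. This is precisely where $k$-rigidity is indispensable: without the guarantee that $f(x)$ and $g(x)$ share a rank they need not be comparable and $\inf$ and $\sup$ would be undefined. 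Once that is secured, the remaining verifications are routine $\min$/$\max$ manipulations.
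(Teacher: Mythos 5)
Your proof is correct and follows essentially the same route as the paper's: rank preservation (from the preceding lemma) makes $\inf$ and $\sup$ well defined on each $\{f(x),g(x)\}$, and your coordinatewise $\min$/$\max$ computation is just the coordinate form of the paper's case analysis on whether $xy$ forms a $12$ or a $21$. Your final paragraph, realizing the embeddings as a sublattice of the product of chains $\prod_{x \in \rho} C_{r(x)}$, simply makes explicit the distributivity claim that the paper states without further argument.
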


\begin{proof}
We give the argument for $I$ only (that for $S$ is similar). It suffices to show that for any $x, y \in \rho$ (without loss of generality, $x$ preceding $y$), the pattern of $I(x)$ and $I(y)$ in $\pi$ is the same as the pattern of $x$ and $y$ in $\rho$. But, this is essentially trivial. If the pattern of $xy$ is $12$ then $\inf (f(x), f(y)) = f(x)$ and $\inf(g(x), g(y)) = g(x)$. So, $\inf (f(x), g(x))$ must form a 12 pattern with $\inf(f(y), g(y))$.  The case where $xy$ has pattern $21$ is just the same.
\end{proof}

More generally, given two embeddings $f$ and $g$ of an arbitrary permutation $\alpha$ in an arbitrary permutation $\beta$ such that the images $f(a)$ and $g(a)$ of any $a \in \alpha$ coincide or form a $12$ pattern, the maps $I$ and $S$ defined in the theorem are also embeddings of $\alpha$ in $\beta$. We will defer a discussion of the distributive lattices mentioned in the theorem above to Section \ref{aftermath}.

Applying the previous theorem repeatedly, we can take the infimum of \emph{all} of the embeddings of a $k$-rigid permutation into an element $\pi \in \I_k$, thus obtaining:

\begin{corollary}
\label{COR leftmost-bottommost}
Let $\pi \in \I_k$ and $\rho$ a $k$-rigid permutation be given. If $\rho \involvedIn \pi$ then there is an embedding of $\rho$ in $\pi$ which simultaneously minimizes the position and value of every element of the image of $\rho$ among all such embeddings.
\end{corollary}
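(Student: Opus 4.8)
The plan is to exhibit the promised embedding as the pointwise infimum of \emph{all} embeddings of $\rho$ in $\pi$, and to show this infimum is again an embedding by iterating Theorem~\ref{THM Lattice Structure}. Since $\pi$ is finite it has only finitely many subsets, so there are only finitely many embeddings of $\rho$ in $\pi$; by the hypothesis $\rho \involvedIn \pi$ there is at least one. Enumerate them as $h_1, \ldots, h_m$.

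The structural input I would use is the preceding Lemma together with the $k$-rigidity of $\rho$. Because $\rho$ is $k$-rigid, every element $a \in \rho$ occurs as some fixed value $i_a$ in a $\delta_k$, and any embedding carries that $\delta_k$ to a $\delta_k$ in $\pi$; hence, by the Lemma, the image of $a$ always has rank $i_a$, independently of the embedding. Thus for each fixed $a$ the images $h_1(a), \ldots, h_m(a)$ all lie in the single increasing subset $C_{i_a}$ of $\pi$. Within an increasing set position and value are comonotone, so any two such images coincide or form a $12$ pattern, and there is a unique element among $h_1(a), \ldots, h_m(a)$ that minimizes position and value simultaneously.

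I would then define $L : \rho \to \pi$ by $L(a) = \inf\{ h_1(a), \ldots, h_m(a)\}$, this common least element. To verify $L$ is an embedding, put $L_1 = h_1$ and $L_{j+1}(x) = \inf(L_j(x), h_{j+1}(x))$, and argue by induction that each $L_j$ is an embedding. The inductive step applies Theorem~\ref{THM Lattice Structure} with $f = L_j$ and $g = h_{j+1}$: its hypotheses hold because $L_j(x)$ and $h_{j+1}(x)$ both have rank $i_x$ and so lie in $C_{i_x}$, whence they coincide or form a $12$ pattern and their infimum is defined. Since the pointwise infimum is associative and commutative, $L_m = L$, which is therefore an embedding, and which by construction minimizes the position and value of every image among all embeddings.

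The one point needing care—and the crux of the argument—is that the infimum is defined at every stage, i.e.\ that the two images being combined never form a $21$ pattern. This is precisely what $k$-rigidity provides, via the Lemma: it forces all images of a given element of $\rho$ into a single increasing subset of $\pi$, so that positions and values can be minimized together rather than in tension.
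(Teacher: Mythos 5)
Your proof is correct and follows exactly the paper's route: the paper also obtains the leftmost-bottommost embedding by applying Theorem~\ref{THM Lattice Structure} repeatedly to take the pointwise infimum of all (finitely many) embeddings of $\rho$ in $\pi$. You have simply made explicit the details the paper leaves implicit—the induction, and the use of rank preservation to guarantee the infima are defined—so nothing further is needed.
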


Naturally enough, we call the embedding whose existence is asserted by this corollary the \emph{leftmost-bottommost} embedding of $\rho$ in $\pi$.

A permutation $\pi$ is called a \emph{merge} of two permutations $\alpha$ and $\beta$ if it can be written as the disjoint union of two sets, the first of which has pattern $\alpha$ and the second of which has pattern $\beta$. If $\A$ and $\B$ are pattern classes, then
\[
\M(\A, \B) = \{ \pi \, : \, \mbox{$\pi$ is a merge of some $\alpha \in \A$ and some $\beta \in \B$} \}
\]
is also a permutation class, called the \emph{merge} of $\A$ and $\B$. For instance $\M(\I_s, \I_t) = \I_{s+t}$ for any $s$ and $t$.

Let two permutations $\alpha$ and $\beta$ be given, together with embeddings $a : \alpha \to \pi$, $b : \beta \to \pi$ that witness $\pi$ being a merge of $\alpha$ and $\beta$ (so the ranges of the embeddings are disjoint and their union is equal to $\pi$). For $x \in \pi$ define the \emph{type of $x$}, $\mathrm{tp}(x) = a$ if $x$ is in the range of $a$ and $\mathrm{tp}(x) = b$ if it is in the range of $b$. For $1 \leq c < |\pi|$, if the types of $(c, \pi(c))$ and $(c+1, \pi(c+1))$ are different, then we say that there is a \emph{type change by position} at $c$. Similarly, for $1 \leq r < |\pi|$, if the types of $(\pi^{-1}(r), r)$ and $(\pi^{-1}(r+1), r+1)$ are different, then we say that there is a \emph{type change by value} at $r$.

Given a positive integer $B$ and two permutation classes $\C$ and $\D$ we define the \emph{$B$-bounded merge of $\C$ and $\D$}:
\[
\M_{B}(\C, \D) =  \left\{ \pi \, : \, \parbox{9cm}{$\pi$ is a merge of some $\alpha \in \C$ and some $\beta \in \D$ having at most $B$ type
changes in total, either by position or value} \right\}
\]
As the number of type changes cannot increase when we delete elements of a merge, $\M_B(\C, \D)$ is also a permutation class.

\begin{example}
The permutation 
\[\{(1,1), (2,2), (3,3), (4,7), (5,8), (6,9), (7,4), (8,5), (9,6)\}\] 
($123789456$ in one line notation) lies in $\M_3(\I_1,\I_1)$ because of the subsequences $123789$ and $456$ and the type changes $(6,9)$ to $(7,4)$ by position and $(3,3)$ to $(7,4)$ and $(9,6)$ to $(4,7)$ by value.
\end{example}

\begin{theorem}
\label{THM Merge Growth}
Let a positive integer $B$ and two permutation classes $\C$ and $\D$ be given. Then,
\begin{eqnarray*}
\stanleyWilfLimit(\M(\C, \D)) &\leq& \left( \sqrt{\stanleyWilfLimit(\C)} + \sqrt{\stanleyWilfLimit(\D)} \right)^2, \mbox{and} \\
\stanleyWilfLimit(\M_B (\C, \D)) &=& \max (\stanleyWilfLimit(\C) , \stanleyWilfLimit(\D) ).
\end{eqnarray*}
\end{theorem}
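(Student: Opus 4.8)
The plan is to treat both inequalities by a single encoding of a merge, and to exploit that the number of \emph{type changes} controls how much freedom that encoding allows. Fix $\pi\in\S_n$ together with a witness that it is a merge, i.e.\ a subset $A\subseteq\pi$ whose pattern $\alpha$ lies in $\C$ and whose complement has pattern $\beta\in\D$. Let $P\subseteq[n]$ be the set of positions occupied by $A$ and $V\subseteq[n]$ the set of values taken by $A$; both have the same size $m=|A|$. The key observation is that the quadruple $(P,V,\alpha,\beta)$ already determines $\pi$: the elements of $A$ are recovered by placing the pattern $\alpha$ on the grid $P\times V$, and the remaining elements by placing $\beta$ on $([n]\setminus P)\times([n]\setminus V)$. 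Consequently the number of $\pi\in\M(\C,\D)\cap\S_n$ is at most the number of such quadruples (a given $\pi$ may admit several witnesses, but over-counting is harmless for an upper bound).

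For the first inequality I would sum over the encoding with $P$ and $V$ ranging over all $m$-subsets of $[n]$, obtaining
\[
|\M(\C,\D)\cap\S_n|\ \le\ \sum_{m=0}^{n}\binom{n}{m}^{2}\,|\C\cap\S_m|\,|\D\cap\S_{n-m}| .
\]
Writing $a=\stanleyWilfLimit(\C)+\epsilon$ and $b=\stanleyWilfLimit(\D)+\epsilon$, the definition of $\limsup$ yields a constant $K$ with $|\C\cap\S_m|\le K a^{m}$ and $|\D\cap\S_{n-m}|\le K b^{\,n-m}$ for all $m$. It then remains to compute the exponential growth rate of $\sum_m\binom{n}{m}^2 a^m b^{\,n-m}$. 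Since this sum has $n+1$ terms, that growth rate equals $\max_{0\le t\le1} f(t)$, where by Stirling's formula $f(t)=a^{t}b^{1-t}/\bigl(t^{2t}(1-t)^{2(1-t)}\bigr)$ is the per-element contribution of the term $m=tn$. A routine differentiation shows the maximum occurs at $t=\sqrt{a}/(\sqrt{a}+\sqrt{b})$ and equals $(\sqrt{a}+\sqrt{b})^2$. Letting $\epsilon\to0$ gives the stated bound. The only real work here is this optimization, which is standard.

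For the second equation the lower bound is immediate: taking $\beta$ to be the empty permutation shows $\C\subseteq\M_B(\C,\D)$, and symmetrically $\D\subseteq\M_B(\C,\D)$, so $\stanleyWilfLimit(\M_B(\C,\D))\ge\max(\stanleyWilfLimit(\C),\stanleyWilfLimit(\D))$. The upper bound is where $B$ enters, and this is the crux. With at most $B$ type changes in total, say $p$ by position and $q$ by value with $p+q\le B$, the set $P$ is a union of at most $p+1$ intervals of positions and $V$ a union of at most $q+1$ intervals of values. Hence $P$ is specified by choosing the $p$ gaps at which the type changes (among the $n-1$ gaps) together with the initial type, giving at most $2\binom{n-1}{p}$ possibilities, and likewise at most $2\binom{n-1}{q}$ for $V$. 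Since $p,q\le B$ these counts are $O(n^{B})$, i.e.\ polynomial in $n$, and there are only $(B+1)^2$ admissible pairs $(p,q)$.

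Feeding this into the encoding, and writing $s=\max(\stanleyWilfLimit(\C),\stanleyWilfLimit(\D))$ so that $|\C\cap\S_m|\,|\D\cap\S_{n-m}|\le K^2 (s+\epsilon)^n$ uniformly in $m$, I obtain $|\M_B(\C,\D)\cap\S_n|\le Q(n)\,(s+\epsilon)^n$ for some polynomial $Q$. Taking $n$-th roots and $\limsup$ kills the polynomial factor, and letting $\epsilon\to0$ gives $\stanleyWilfLimit(\M_B(\C,\D))\le s$, completing the proof. The essential point, and the main obstacle to anticipate, is exactly the contrast between the two cases: in the unbounded merge the factor $\binom{n}{m}^2$ is genuinely exponential and produces the larger growth rate, whereas bounding the number of type changes collapses the choice of $(P,V)$ to polynomially many, so the growth rate is then dictated entirely by the two patterns $\alpha$ and $\beta$.
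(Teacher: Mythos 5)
Your proof is correct and follows essentially the same approach as the paper: the same quadruple encoding giving the bound $\sum_k \binom{n}{k}^2 c_k d_{n-k}$ for unrestricted merges, and the same observation that bounding type changes reduces the choice of position/value sets to polynomially many, so the growth rate collapses to $\max(\stanleyWilfLimit(\C),\stanleyWilfLimit(\D))$. The only difference is one of detail: you carry out explicitly the Stirling optimization that the paper delegates to a citation of Regev, and you handle the $\limsup$-to-uniform-bound step (absorbing finitely many exceptions into a constant $K$) that the paper leaves implicit.
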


\begin{proof}
Let $c_n = |\C \cap \S_n|$, $d_n = | \D \cap \S_n |$, $M_n = | \M(\A, \B) \cap \S_n |$ and $m_n = | \M_B(\A, \B) \cap \S_n |$. A merge of $\alpha \in \A \cap \S_k$ and $\beta \in \B \cap \S_{n-k} $ can be defined by independently choosing $k$ (from $n$) positions and $k$ values to hold the pattern $\alpha$, while fitting the pattern $\beta$ in the remaining positions and values. It follows that:
\[
M_n \leq \sum_{k=0}^n \binom{n}{k}^2 c_k d_{n-k}.
\]
So,
\[
s(\M(\C, \D)) \leq \limsup_{n \to \infty} \left( \sum_{k=0}^n \binom{n}{k}^2 c_k d_{n-k} \right)^{1/n}.
\]
The similarity of the square root of each term in the sum to a term of the expansion of $\left( \sqrt{s(\C)} + \sqrt{s(\D)} \right)^n$ is sufficient to establish the first of the results claimed in the theorem (an argument that goes back to \cite{Regev}).

For the second result, in order to specify a $B$-bounded merge of length $n$ we need to specify at most $B$ positions and values at which a type change can occur, and then two permutations in $\C$ and $\D$ of suitable length. Additionally, $\C, \D \subseteq \M_B(\C, \D)$. So (certainly for $n > 2B$):
\[
\max( c_n, d_n) \leq m_n \leq \binom{n}{B}^2 \max \{ c_k d_{n-k} \, : \, 0 \leq k \leq n \}.
\]
Taking $n^{\mbox{\scriptsize th}}$ roots throughout, and observing that $\binom{n}{B}^{2/n} \to 1$ establishes the second result.
\end{proof}

Note that $\stanleyWilfLimit(\I_k) = k^2$, so the bound given by the first estimate is tight for $\M(\I_n, \I_m)$. For the remainder of this paper we will only be using the second of these estimates; that the growth rate of a bounded merge of two permutation classes is the maximum of their individual growth rates.

The \emph{direct sum} $\alpha \oplus \beta$ of two permutations $\alpha$ and $\beta$ is that merge of $\alpha$ with $\beta$ in which the image of $\alpha$ occupies the first $|\alpha|$ places both by position and value. A permutation $\pi$ is called \emph{plus indecomposable} if $\pi \neq \alpha \oplus \beta$ for any pair of non-empty permutations $\alpha$ and $\beta$. 

If $\pi \in \I_2$ is not 2-rigid, then, for some $\alpha$ and $\beta$, $\pi = \alpha \oplus 1 \oplus \beta$ since it must contain an element which has no larger preceding element, nor any smaller following element. Thus, all the preceding elements (of pattern $\alpha$) are smaller than it and the following ones (of pattern $\beta$) are larger. Such an element is called an \emph{articulation point} of $\pi$. Conversely, $\pi \in \I_2$ is 2-rigid exactly if $\pi = \alpha_1 \oplus \alpha_2 \oplus \cdots \oplus \alpha_k$ where $k \geq 1$ and each $\alpha_i$ is a plus indecomposable permutation of length at least 2.

Let $1^n = \iota_n$ be the direct sum of $n$ copies of the singleton permutation. If $\pi \in \I_2$ is an arbitrary permutation then there is a unique sequence $\rho_1$, $\rho_2$, \ldots, $\rho_c$ of plus indecomposable permutations all of length at least 2 such that:
\[
\pi = 1^{m_0} \oplus \rho_1 \oplus 1^{m_1} \oplus \rho_2 \oplus \cdots \oplus 1^{m_{c-1}} \oplus \rho_c \oplus 1^{m_c}.
\]
In this case, we define the \emph{rigid reduction} of $\pi$
\[
\rigidReduction (\pi) = \rho_1  \oplus \rho_2 \oplus \cdots  \oplus \rho_c.
\]
For example:
\[
\rigidReduction (2413\,5\,76\,89) = 2413 \, 65.
\]
For a set $X$ of permutations $\rigidReduction (X) = \{ \rigidReduction(\pi) \, : \, \pi \in X\}$.

\section{The main result}\label{main section}

We now turn our attention almost exclusively to infinite subclasses of $\I_2 = \avoid(321)$ with the aim of proving:

\begin{theorem}
\label{THM Main Theorem}
Let $X$ be any finite set of permutations. Then $\I_2 \cap \avoid(X)$ and $\I_2 \cap \avoid(\rigidReduction(X))$ have the same growth rate.
\end{theorem}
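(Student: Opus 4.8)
The plan is to prove the two inequalities separately. Write $\C = \I_2 \cap \avoid(X)$ and $\C' = \I_2 \cap \avoid(\rigidReduction(X))$. Since $\rigidReduction(\tau) \involvedIn \tau$ for every permutation $\tau$, avoiding $\rigidReduction(\tau)$ is stronger than avoiding $\tau$, so $\C' \subseteq \C$ and hence $\stanleyWilfLimit(\C') \le \stanleyWilfLimit(\C)$ is immediate. All the work goes into the reverse inequality, and my strategy is to produce a constant $B = B(X)$ with $\C \subseteq \M_B(\C', \C')$; then Theorem~\ref{THM Merge Growth} gives $\stanleyWilfLimit(\C) \le \stanleyWilfLimit(\M_B(\C',\C')) = \stanleyWilfLimit(\C')$, and the two bounds combine to the claim. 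I may assume throughout that each $\tau \in X$ lies in $\I_2$, since a $\tau$ involving $321$ affects neither $\avoid(X)\cap\I_2$ nor its reduced counterpart.

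The first tool I would establish is that rigidity lets avoidance descend to the rigid reduction. Each $\rigidReduction(\tau)$ is a direct sum of plus-indecomposable blocks of length at least $2$ and so is $2$-rigid; by rank preservation (the Lemma preceding Theorem~\ref{THM Lattice Structure}) the image of any embedding of a $2$-rigid permutation into $\pi \in \I_2$ consists entirely of elements lying in some $\delta_2$, and these are exactly the elements of the plus-indecomposable summands of $\pi$ of length $\ge 2$, i.e.\ of $\rigidReduction(\pi)$. Consequently, for rigid $\rho$ one has $\rho \involvedIn \pi$ if and only if $\rho \involvedIn \rigidReduction(\pi)$; in particular $\pi \in \C'$ if and only if $\rigidReduction(\pi) \in \C'$, so membership in $\C'$ is controlled entirely by the blocks of $\pi$ and not by its articulation points.

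Next I would pin down how an articulated pattern $\tau = 1^{a_0} \oplus r_1 \oplus 1^{a_1} \oplus \cdots \oplus r_d \oplus 1^{a_d}$ embeds in $\pi \in \I_2$. Writing $\rigidReduction(\pi) = \rho_1 \oplus \cdots \oplus \rho_c$, each plus-indecomposable $r_i$ must map into a single block $\rho_{j(i)}$ with $j(1) \le \cdots \le j(d)$ (a block-spanning image would exhibit a direct-sum splitting of $r_i$), while each articulation run $1^{a_i}$ must map to points of $\pi$ strictly above the image of $r_1,\dots,r_i$ and strictly below that of $r_{i+1},\dots,r_d$, for which the intervening blocks and the genuine articulation points of $\pi$ in that gap are precisely the candidates. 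Thus $\pi$ avoids $\tau$ while $\rigidReduction(\pi)$ contains $\rigidReduction(\tau)$ exactly when every such embedding of the reduction is \emph{starved}: some gap between consecutive images offers fewer than the required number of interpolation points. This starvation condition is what confines the part of $\pi$ responsible for $\pi \in \C \setminus \C'$ to a bounded region, and it is the engine of the argument; passing to leftmost-bottommost embeddings via Corollary~\ref{COR leftmost-bottommost} keeps the bookkeeping canonical.

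Finally I would convert starvation into a bounded merge through a staircase decomposition of the block sequence $\rho_1 \oplus \cdots \oplus \rho_c$: group the blocks, together with the interleaved articulation points, into consecutive segments so that no reduced pattern of $X$ can straddle a segment boundary, and two-colour these segments so that each colour class avoids every $\rigidReduction(\tau)$; by the first tool each colour class then lies in $\C'$, and $\pi$ is their merge. I would carry this out in two cases for clarity, according to whether the reduced patterns are single blocks ($d=1$, with articulation points only at the extremes) or genuinely multi-block ($d \ge 2$, with interior articulation points such as the $3$ of $21354$), the latter being harder because an interior articulation point can be supplied by many different intervening blocks. The main obstacle is precisely the \emph{uniformity} of $B$: one must argue that the starvation condition forces only a number of ``active'' segments bounded in terms of $X$, so that the two-colouring incurs at most $B(X)$ type changes by position and by value regardless of how large or how finely interleaved $\pi$ is. Granting this, $\pi \in \M_B(\C',\C')$ and Theorem~\ref{THM Merge Growth} closes the argument.
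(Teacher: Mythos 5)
Your setup and first two tools are sound: the inclusion $\C' \subseteq \C$ is correct, the observation that a $2$-rigid pattern embeds in $\pi$ if and only if it embeds in $\rigidReduction(\pi)$ is correct, and plus-indecomposable parts of a pattern do indeed land inside single blocks of $\pi$. The fatal problem is your final step, which is where the entire content of the theorem lives. As literally stated it is self-contradictory: if no copy of any $\rigidReduction(\tau)$ straddles a segment boundary, then every copy lies inside a single segment, hence inside one colour class, so the demand that both colour classes avoid $\rigidReduction(\tau)$ can be met only when $\pi$ itself avoids $\rigidReduction(\tau)$ --- that is, only for $\pi \in \C'$, never for the permutations in $\C \setminus \C'$ you need to handle. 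More importantly, no repair of the scheme can work while your segments are unions of blocks and articulation points of the plus-decomposition, because copies of reduced patterns need not respect that decomposition at all: they can sit entirely inside one plus-indecomposable block. Concretely, take $X = \{21354\}$, so $\rigidReduction(X) = \{2143\}$. The permutation $351624$ is plus-indecomposable, avoids $321$ and $21354$, and contains $2143$ (its unique copy of $2143$ has an empty ``articulation box,'' so no $21354$ arises); inflating each of its points by an increasing run of length $m$ yields arbitrarily long plus-indecomposable permutations in $\I_2 \cap \avoid(21354)$ containing on the order of $m^4$ copies of $2143$. On such a $\pi$ your construction has exactly one segment --- all of $\pi$ --- and that segment contains $2143$, so no admissible colouring exists. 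Any valid merge decomposition must cut \emph{through} the block, and you have no mechanism for this: the ``uniformity of $B$'' that you defer with ``granting this'' is not a technical loose end but is precisely the theorem.

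This is exactly what the paper's machinery is for, and nothing in your proposal substitutes for it. Staircase decompositions are transverse to the plus-decomposition (they cut through indecomposable blocks); Proposition~\ref{PROP Staircase or Merge} converts avoidance of a generic staircase into a bounded merge with $B$ depending only on the staircase parameters; and the leftmost-bottommost embedding of the rigid prefix $\alpha$ (Corollary~\ref{COR leftmost-bottommost}) sets up the quadrant analysis that forces both parts of the merge to avoid $\alpha \oplus \beta$. Note also that the paper never attempts your one-shot containment $\C \subseteq \M_B(\C', \C')$: it removes one articulation point at a time, writing $\tau = \alpha \oplus 1 \oplus \beta$ with $\alpha$ rigid or empty, and inducts on the number of articulation points, invoking Theorem~\ref{THM Merge Growth} at each step. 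Iterating gives only nested bounded merges rather than a single two-part merge, which is weaker than what you claim but is all that growth rates require. Your plan to handle patterns with $d \geq 2$ interior articulation points in a single pass thus takes on an additional difficulty that the paper's induction is specifically designed to avoid, on top of the missing core argument.
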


This seems a surprising result as, \emph{a priori}, the second class appears to be much smaller than the first one -- consider for instance $\I_2 \cap \avoid(21 \, 34 \, 65 \, 7)$ and $\I_2 \cap \avoid(2143)$. To prove it, some further preparation is required.

A \emph{staircase decomposition} of a permutation $\pi \in \I_2$ is a partition $\alpha_1$, $\alpha_2$, \ldots, $\alpha_k$ of $\pi$ that has the following properties:
\begin{itemize}
\item
The pattern of each $\alpha_i$ is increasing;
\item
For $j \geq 1$, $\alpha_{2j}$ lies entirely to the right of $\alpha_{2j-1}$;
\item
For $j \geq 1$, $\alpha_{2j+1}$ lies entirely above $\alpha_{2j}$;
\item
If $i - j \geq 2$ then $\alpha_i$ lies entirely above and to the right of $\alpha_j$.
\end{itemize}
Figure \ref{FIG Staircase} should make it clear why the term ``staircase decomposition'' was chosen. We refer to the individual constituents $\alpha_i$ of the staircase as its \emph{blocks}.

\begin{figure}[ht]
\begin{center}
\begin{tabular}{ccc}
\psset{xunit=0.015in, yunit=0.015in}
\psset{linewidth=0.01in}
\begin{pspicture}(0,0)(100,100)
\psline(5,5)(65,5)(65,95)(95,95)(95,35)(5,35)(5,5)
\psline(35,5)(35,65)(95,65)
\pscircle*(10,15){0.02in}
\pscircle*(20,18){0.02in}
\pscircle*(30,30){0.02in}
\pscircle*(40,10){0.02in}
\pscircle*(60,25){0.02in}
\pscircle*(45,38){0.02in}
\pscircle*(50,48){0.02in}
\pscircle*(55,58){0.02in}
\pscircle*(68,40){0.02in}
\pscircle*(75,45){0.02in}
\pscircle*(82,53){0.02in}
\pscircle*(89,63){0.02in}
\pscircle*(85,80){0.02in}
\end{pspicture}
&\rule{40pt}{0pt}&
\psset{xunit=0.015in, yunit=0.015in}
\psset{linewidth=0.01in}
\begin{pspicture}(0,0)(100,100)
\psline(5,5)(65,5)(65,95)(95,95)(95,35)(5,35)(5,5)
\psline(35,5)(35,65)(95,65)
\pscircle*(10,10){0.02in}
\pscircle*(20,20){0.02in}
\pscircle*(30,30){0.02in}
\pscircle*(40,40){0.02in}
\pscircle*(50,50){0.02in}
\pscircle*(60,60){0.02in}
\pscircle*(70,70){0.02in}
\pscircle*(80,80){0.02in}
\pscircle*(90,90){0.02in}
\pscircle*(43,8){0.02in}
\pscircle*(53,18){0.02in}
\pscircle*(63,28){0.02in}
\pscircle*(73,38){0.02in}
\pscircle*(83,48){0.02in}
\pscircle*(93,58){0.02in}
\end{pspicture}
\end{tabular}
\end{center}
\caption{On the left, a staircase decomposition; and on the right, a generic staircase with five blocks of size three.}
\label{FIG Staircase}
\end{figure}
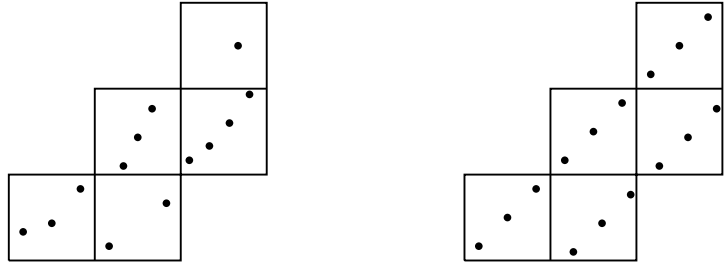

Every $\pi \in \I_2$ has a staircase decomposition. This can be constructed inductively by taking, for odd $i$, $\alpha_i$ to be the longest initial segment by position of $\displaystyle \pi \setminus \cup_{j < i} \alpha_j$ that has an increasing pattern; and for even $i$, $\alpha_i$ to be the longest initial segment by value of $\displaystyle \pi \setminus \cup_{j < i} \alpha_j$ that has an increasing pattern.

Let positive integers $k$ and $b$ be given. The \emph{generic staircase with $k$ blocks of size $b$} or $(k,b)$-generic staircase is that permutation $\pi$ which has a staircase decomposition $\alpha_1$, $\alpha_2$, \ldots, $\alpha_k$, where for each $i$, $|\alpha_i| = b$ and additionally:
\begin{itemize}
\item
If $i \geq 1$ and $t \leq b$, then the $t^{\mbox{\scriptsize th}}$ element of $\alpha_{2i}$ lies in value between the $(t-1)^{\mbox{\scriptsize st}}$ and $t^{\mbox{\scriptsize th}}$ elements of $\alpha_{2i-1}$;
\item
If $i \geq 1$ and $t \leq b$, then the $t^{\mbox{\scriptsize th}}$ element of $\alpha_{2i+1}$ lies in position between the $t^{\mbox{\scriptsize th}}$ and $(t+1)^{\mbox{\scriptsize st}}$ elements of $\alpha_{2i}$.
\end{itemize}
Figure \ref{FIG Staircase} also illustrates an example of a generic staircase.

\begin{proposition}
Every $\pi \in \I_2$ occurs as a pattern in some generic staircase.
\end{proposition}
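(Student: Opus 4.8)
The plan is to realize $\pi$ inside a generic staircase having the same number of blocks as a staircase decomposition of $\pi$, but with much larger blocks. Concretely, fix a staircase decomposition $\beta_1, \beta_2, \ldots, \beta_k$ of $\pi$ (one exists by the inductive construction noted just above the statement), write $N = |\pi|$, and let $b$ be a large integer to be chosen at the end. Let $G$ be the $(k,b)$-generic staircase, with blocks $\alpha_1, \ldots, \alpha_k$, each increasing of length $b$, and index the elements of $\alpha_i$ as $\alpha_i[1], \ldots, \alpha_i[b]$ in increasing order (which is simultaneously their left-to-right and bottom-to-top order). I will build an embedding $\phi \colon \pi \to G$ that sends each block $\beta_i$ onto an increasing subsequence of $\alpha_i$; equivalently, for each $i$ I must choose an index set $T_i \subseteq \{1, \ldots, b\}$ with $|T_i| = |\beta_i|$ and send the $s$-th element of $\beta_i$ (in increasing order) to $\alpha_i[t]$, where $t$ is the $s$-th smallest element of $T_i$.

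Since both $\pi$ and $G$ carry staircase decompositions, most of the order relations that $\phi$ must respect come for free. Within a block the map is order-preserving because we use increasing subsequences of the increasing $\alpha_i$. For non-consecutive blocks ($|i-j| \ge 2$) the images are automatically separated correctly, because in $G$ the block $\alpha_i$ lies entirely above and to the right of $\alpha_j$, matching the corresponding requirement on $\beta_i, \beta_j$. Likewise an odd--even consecutive pair $\beta_{2j-1}, \beta_{2j}$ must have $\beta_{2j}$ to the right of $\beta_{2j-1}$, which holds for $\alpha_{2j-1}, \alpha_{2j}$ regardless of the chosen subsets, and an even--odd pair must have $\beta_{2j+1}$ above $\beta_{2j}$, which again holds in $G$ automatically. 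Thus the only relations $\phi$ must actively arrange are the value-interleaving of each pair $\beta_{2j-1}, \beta_{2j}$ and the position-interleaving of each pair $\beta_{2j}, \beta_{2j+1}$.

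The useful structural feature is that in $G$ these two interleavings are each a perfect ``zipper''. Reading $\alpha_{2j-1} \cup \alpha_{2j}$ by value gives $\alpha_{2j}[1], \alpha_{2j-1}[1], \alpha_{2j}[2], \alpha_{2j-1}[2], \ldots$, so $\alpha_{2j-1}[p]$ lies below $\alpha_{2j}[q]$ precisely when $p < q$; reading $\alpha_{2j} \cup \alpha_{2j+1}$ by position gives the analogous alternation. Because each element $\alpha_{2j}[q]$ carries the \emph{same} within-block index $q$ in both of its roles (its value relative to $\alpha_{2j-1}$ and its position relative to $\alpha_{2j+1}$), a single choice of the increasing subset $T_{2j}$ simultaneously fixes both interleavings in which $\alpha_{2j}$ participates. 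This coupling is the step I expect to be the main obstacle: $T_{2j}$ must be compatible both with its predecessor (in value) and with its successor (in position) at once.

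I would dissolve the coupling by choosing the subsets sequentially from left to right. Having fixed $T_1, \ldots, T_{i-1}$, the block $T_i$ is constrained only by a single interleaving with its immediate predecessor $T_{i-1}$ (by value if $i$ is even, by position if $i$ is odd); the interleaving of $T_i$ with its successor becomes a constraint only when $T_{i+1}$ is chosen. To realize one such interleaving, observe that matching a shuffle of two increasing sequences amounts to prescribing, for each element of $\beta_i$, how many elements of the already-placed $\beta_{i-1}$ lie below it (respectively to its left); by the zipper description this requires each index $q_s \in T_i$ to fall in a prescribed gap $(p_{d_s}, p_{d_s+1}]$ between consecutive indices of $T_{i-1}$, determined by the counts $d_s$. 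Provided the previously placed indices are spread out with wide gaps, these (weakly nested) intervals contain enough room to select the required increasing $q_s$ and, at the same time, to keep $T_i$ itself spread out for the benefit of the next step. Since there are only $k$ blocks and $N$ elements in all, maintaining ``wide enough gaps'' costs only a finite amount of room, so taking $b$ sufficiently large makes every step succeed. Once all the $T_i$ are chosen, checking that $\phi$ is an embedding reduces to the automatic relations of the second paragraph together with the interleavings arranged here, which finishes the argument.
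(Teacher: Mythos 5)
Your proof is correct, and it shares the paper's overall skeleton --- fix a staircase decomposition of $\pi$, map the $i$-th block of $\pi$ into the $i$-th block of a generic staircase, and proceed sequentially from left to right, observing that only the two interleavings between consecutive blocks (value for an odd--even pair, position for an even--odd pair) need to be actively arranged, everything else being forced by the staircase shape. Where you genuinely diverge is in how each step is made feasible. The paper sidesteps all quantitative bookkeeping by first embedding $\pi$ into an \emph{infinite} staircase whose blocks are dense linear orders (copies of the dyadic rationals), where density makes every required interleaving trivially realizable; it then truncates to the dyadics of bounded denominator and perturbs the blocks slightly to recover a genuine finite generic staircase of size $2^m-1$. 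You instead work directly inside the finite $(k,b)$-generic staircase, exploiting its exact ``zipper'' structure to reduce each step to placing indices in prescribed half-open gaps of the previously chosen index set, and you pay for finiteness with a wide-gaps invariant: gap widths shrink by a factor of at most about $N+1$ per block, and since there are only $k$ blocks, $b$ of order $(N+1)^{k+1}$ suffices. Your route is more elementary (no infinite intermediate object, no perturbation step) and yields an explicit bound on the block size needed; its cost is precisely the bookkeeping you sketch in your final paragraph, which is standard but would need to be written out (in particular, the observation that elements sharing the same gap must receive strictly increasing indices within that gap, while elements in distinct gaps are automatically ordered correctly). Neither approach is stronger than the other in outcome; they differ only in whether the room needed for the interleavings is obtained by density or by counting.
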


\begin{proof}
Let $\pi \in \I_2$ be given, and choose a staircase decomposition $\alpha_1$, $\alpha_2$, \ldots, $\alpha_k$ of $\pi$. Consider the infinite set of points shown in Figure \ref{FIG Infinite Staircase}. The points in each of the line segments within a block are a translation of the set $D \cap (0,1)$ where $D$ is the set of dyadic rationals (rationals whose denominator is a power of 2) and therefore form a dense linear order without endpoints. Choose an arbitrary embedding of $\alpha_1$ into the first block. Then, $\alpha_2$ can be embedded into the second block in such a way that the pattern of $\alpha_1 \cup \alpha_2$ is preserved (simply because we have a dense linear order available here). Similarly, having embedded $\alpha_1$ and $\alpha_2$, we can embed $\alpha_3$ in the third block. Its relationship with the embedded copy of $\alpha_1$ is fixed by the fourth condition in the definition of a staircase decomposition, and its proper relationship with the embedded copy of $\alpha_2$ can be assured using the density again. Proceeding inductively we can find an embedding of $\pi$ into this infinite set. Since $\pi$ is finite, the range of this embedding is contained entirely among the points whose coordinates have a denominator at most $2^m$ for some $m$. Now reduce the infinite staircase to the finite set of points of this type. The result is not a generic staircase as some points share a common horizontal or vertical component. However, each odd numbered block can be shifted upwards by $1/2^{m+1}$ (or any suitably small amount) and each even numbered block leftwards by the same amount. This does not change the relationship of any pair of points that were previously on different horizontal or vertical lines (and in particular, the images of the points of $\pi$), and the resulting staircase is generic with $k$ blocks of size $2^m - 1$.

\begin{figure}[ht]
\begin{center}
\psset{xunit=0.02in, yunit=0.02in}
\psset{linewidth=0.01in}
\begin{pspicture}(0,0)(100,100)
\psline(5,5)(35,5)(35,35)(50,35)(50,20)(5,20)(5,5)
\psline(20,5)(20,35)(35,35)
\psline(65,50)(95,50)(95,80)(80,80)(80,50)
\psline(65,50)(65,65)(95,65)
\pscircle*(48,38){0.015in}
\pscircle*(55,42){0.015in}
\pscircle*(62,46){0.015in}
\psline(7,7)(18,18)
\pscircle[fillstyle=solid,fillcolor=white](7,7){0.015in}
\pscircle[fillstyle=solid,fillcolor=white](18,18){0.015in}
\psline(22,7)(33,18)
\pscircle[fillstyle=solid,fillcolor=white](22,7){0.015in}
\pscircle[fillstyle=solid,fillcolor=white](33,18){0.015in}
\psline(22,22)(33,33)
\pscircle[fillstyle=solid,fillcolor=white](22,22){0.015in}
\pscircle[fillstyle=solid,fillcolor=white](33,33){0.015in}
\psline(37,22)(48,33)
\pscircle[fillstyle=solid,fillcolor=white](37,22){0.015in}
\pscircle[fillstyle=solid,fillcolor=white](48,33){0.015in}
\psline(67,52)(78,63)
\pscircle[fillstyle=solid,fillcolor=white](67,52){0.015in}
\pscircle[fillstyle=solid,fillcolor=white](78,63){0.015in}
\psline(82,52)(93,63)
\pscircle[fillstyle=solid,fillcolor=white](82,52){0.015in}
\pscircle[fillstyle=solid,fillcolor=white](93,63){0.015in}
\psline(82,67)(93,78)
\pscircle[fillstyle=solid,fillcolor=white](82,67){0.015in}
\pscircle[fillstyle=solid,fillcolor=white](93,78){0.015in}
\end{pspicture}
\end{center}
\caption{A staircase where each block is a dense linear order without endpoints.}
\label{FIG Infinite Staircase}
\end{figure}
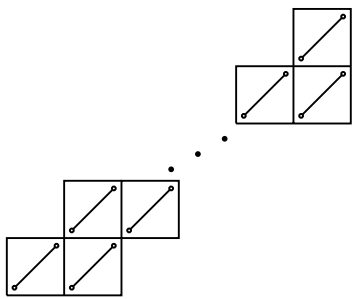
\end{proof}


The following technical proposition links together bounded merges  and generic staircases.  It shows that a $321$-avoiding permutation that avoids a generic staircase is a bounded merge of two increasing permutations where the parameters of the bounded merge are dependent on the parameters of the generic staircase.  We use it in Propositions \ref{Special_Case} and \ref{General_Case} to show that a permutation of $\avoid(321)$ that avoids some extra pattern other than $321$ lies in a bounded merge of classes which avoid shorter (but related) patterns.

\begin{proposition}
\label{PROP Staircase or Merge}
Let positive integers $k$ and $b$ be given. There is a positive integer $B$ (depending only on $k$ and $b$) such that for all $\pi \in \I_2$, either $\pi$ contains a $(k,b)$-generic staircase, or $\pi$ is a $B$-bounded merge of two permutations $\lambda$ and $\beta$ such that the image of $\lambda$ contains all the elements preceding the minimum element of $\pi$, and the image of $\beta$ contains all the elements less than the first element of $\pi$.
\end{proposition}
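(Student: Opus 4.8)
The plan is to read everything off the staircase decomposition $\alpha_1,\ldots,\alpha_m$ of $\pi$, after first recording the rigid geometry forced by $321$-avoidance near the bottom-left corner. Let $p$ be the position of the minimum and $\pi(1)$ the value of the first entry. No entry can be both before the minimum and below the first entry, since together with those two it would form a $321$; thus the region with position $<p$ and value $<\pi(1)$ is empty. It follows that the entries before the minimum are increasing (a descent among them plus the minimum is a $321$) and all have value $\ge\pi(1)$, while the entries below the first entry are increasing and all have position $\ge p$. Hence the two prescribed sets are disjoint, so the requirements that $\lambda$ contain the former and $\beta$ the latter are compatible. These sets are moreover pinned down inside the decomposition: the entries before the minimum are exactly the block $\alpha_1$, and a short look at the greedy construction shows that every value below $\pi(1)$ is swept into $\alpha_2$. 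So the only forced assignments are $\alpha_1\subseteq\lambda$ and the low part of $\alpha_2$ into $\beta$; everything weakly above $\pi(1)$ and weakly right of $p$ is free.

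The mechanism that keeps a merge cheap is that, by the fourth staircase axiom, blocks whose indices differ by at least two are cleanly separated in both coordinates, so only consecutive blocks interleave. Consequently, if two consecutive blocks are placed in the \emph{same} part their interleaving is internal and costs no type change, and type changes arise only where the colouring of the block sequence switches. I would call the junction between an odd and the following even block \emph{thin} if fewer than $b$ of the value-gaps of the odd block are occupied by the even block (dually in position for an even/odd junction), and \emph{rich} otherwise; a thin junction can be crossed at cost $O(b)$.

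The heart of the proof, and what I expect to be the main obstacle, is the rich case: if $k$ consecutive blocks are joined by rich junctions (every adjacent pair rich), then $\pi$ contains the $(k,b)$-generic staircase. I would build this embedding block by block, choosing $b$ suitably spread entries in each block so as to realise the generic pattern, the room being supplied by richness. The delicate point is that an interior block must furnish $b$ entries that simultaneously value-interleave with the entries already chosen in its predecessor and position-interleave with those to be chosen in its successor. This is where I would use that each block is increasing, so that its value order and its position order coincide: a single choice of well-spread entries meets both one-dimensional demands at once. Making the induction go through -- keeping the chosen entries spread widely enough that each gap they create is still occupied at the next junction -- is the crux, and it is essentially the reverse of the density argument already used to embed an arbitrary element of $\I_2$ into a generic staircase.

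With these two ingredients the dichotomy assembles quickly. Examine the junctions outward from the corner. If the first $k-1$ are all rich, the intervening blocks $\alpha_1,\ldots,\alpha_k$ yield the required $(k,b)$-generic staircase and we are in the first alternative. Otherwise a thin junction occurs among the first $k-1$, say between $\alpha_j$ and $\alpha_{j+1}$; put $\alpha_1,\ldots,\alpha_j$ into $\lambda$ (carving the low part of $\alpha_2$ into $\beta$) and $\alpha_{j+1},\ldots,\alpha_m$ into $\beta$. All junctions strictly inside either run are now internal and free; the low part of $\alpha_2$ forms a single contiguous low-value, early-position excursion, contributing $O(1)$ type changes; and the one crossing junction is thin, contributing $O(b)$. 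Thus the total number of type changes is bounded by a function of $b$ (hence of $k$ and $b$) alone, and taking $B$ to be this bound completes the proof. The bookkeeping I would watch most carefully is exactly that the $\alpha_2$ excursion and the single thin crossing are the only sources of type changes -- that placing whole runs of blocks in one part really does suppress all the rich interleavings between them.
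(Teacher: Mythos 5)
Your reduction has a genuine gap, and it sits exactly where you flagged the crux: the rich case. Pairwise richness of consecutive junctions does not certify a generic staircase, because the entries of an interior block that occupy value-gaps of its predecessor need not be (or be anywhere near) the entries whose position-gaps are occupied by its successor; the remark that a block is increasing does not reconcile the two demands, since one constrains where the block's entries sit among the predecessor's values while the other constrains where the successor's entries sit among the block's positions, and these can be witnessed by disjoint parts of the block. Concretely, take $k=3$, $b=2$ and $\pi = 24135768 \in \I_2$. Its staircase decomposition has $\alpha_1$ consisting of the entries of values $2,4$ (positions $1,2$), $\alpha_2$ of values $1,3,5,6$ (positions $3,4,5,7$), and $\alpha_3$ of values $7,8$ (positions $6,8$). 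The junction $(\alpha_1,\alpha_2)$ is rich: the values $1$ and $3$ occupy two value-gaps of $\alpha_1$. The junction $(\alpha_2,\alpha_3)$ is rich: the positions $6$ and $8$ occupy two position-gaps of $\alpha_2$. Yet $\pi$ does not contain the $(3,2)$-generic staircase, which is the permutation $241536$: that pattern has a unique crossing pair of inversions (those at positions $(1,3)$ and $(2,5)$), any embedding must send it to a crossing pair of inversions of $\pi$, and the only crossing pair in $\pi$ occupies positions $(1,3)$ and $(2,4)$, leaving no room for the image of position $4$ of the pattern strictly between positions $3$ and $4$ of $\pi$. The failure is exactly the one you worried about: the witnesses of value-richness in $\alpha_2$ (the entries $1,3$) are disjoint from the witnesses of position-richness (the entries $5,6$), and no two entries of $\pi$ can do both jobs. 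Raising the richness threshold does not repair this; the same two-cluster structure inside a middle block defeats any condition imposed junction by junction, so the intermediate claim ``all junctions rich implies a generic staircase'' is false, not merely unproved.

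What is missing is a condition that propagates through all $k$ blocks simultaneously, and this is how the paper argues. It labels the elements of the first block by their values and pushes labels forward block by block (each element inherits the largest label occurring on an element of the previous block below it, or to its left, according to parity). A label surviving into the $k$-th block certifies a full chain of suitably nested elements, one per block; hence $b$ surviving labels yield a $(k,b)$-generic staircase, while if fewer than $b$ labels survive, the elements whose labels die off are confined to boundedly many intervals by position and by value, and it is this global count -- not a single thin junction -- that produces the bounded merge with $B=(k+2)(b+1)/2$. (The paper also works with a pair of intertwined staircases rather than one staircase plus a carved-out low part of $\alpha_2$, but that difference is cosmetic.) Your peripheral observations are sound: only consecutive blocks interleave, whole runs of blocks placed in one part cost no type changes, and the forced assignments of $\alpha_1$ to $\lambda$ and of the entries below $\pi(1)$ to $\beta$ are correct and compatible. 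But the dichotomy itself must be global (how many channels survive across all $k$ blocks) rather than local (whether each junction is rich), so the heart of your argument needs to be replaced by something like the paper's labelling scheme.
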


\begin{proof}
The proof will show that the proposition is true with $B=(k+2)(b+1)/2$. 

Let $\pi \in \I_2$ be given. Then there is a decomposition of $\pi$ into a pair of intertwined staircases which is illustrated in Figure \ref{FIG Intertwined Staircases}. In this decomposition consider the staircase that begins with the block $\lambda_1$ which consists of all the elements preceding the least element of $\pi$. If this staircase has fewer than $k$ blocks then $\pi$ is a $k$-bounded merge of two permutations having the requisite properties. So, suppose that at least $k$ blocks occur in this staircase.
\begin{figure}[ht]
\begin{center}
\psset{xunit=0.02in, yunit=0.02in}
\psset{linewidth=0.01in}
\begin{pspicture}(0,0)(100,100)
\rput(5,20){%
\psline(0,0)(15,0)(15,15)(0,15)(0,0)
\psline(3,3)(12,12)
}
\rput(35,20){%
\psline(0,0)(15,0)(15,15)(0,15)(0,0)
\psline(3,3)(12,12)
}
\rput(35,50){%
\psline(0,0)(15,0)(15,15)(0,15)(0,0)
\psline(3,3)(12,12)
}
\rput(65,50){%
\psline(0,0)(15,0)(15,15)(0,15)(0,0)
\psline(3,3)(12,12)
}
\rput(65,80){%
\psline(0,0)(15,0)(15,15)(0,15)(0,0)
\psline(3,3)(12,12)
}
\rput(20,5){%
\psline[linestyle=dashed, dash=3pt 2pt](0,0)(15,0)(15,15)(0,15)(0,0)
\psline[linestyle=dotted,dotsep=1.5pt](3,3)(12,12)
}
\rput(20,35){%
\psline[linestyle=dashed, dash=3pt 2pt](0,0)(15,0)(15,15)(0,15)(0,0)
\psline[linestyle=dotted,dotsep=1.5pt](3,3)(12,12)
}
\rput(50,35){%
\psline[linestyle=dashed, dash=3pt 2pt](0,0)(15,0)(15,15)(0,15)(0,0)
\psline[linestyle=dotted,dotsep=1.5pt](3,3)(12,12)
}
\rput(50,65){%
\psline[linestyle=dashed, dash=3pt 2pt](0,0)(15,0)(15,15)(0,15)(0,0)
\psline[linestyle=dotted,dotsep=1.5pt](3,3)(12,12)
}
\rput(80,65){%
\psline[linestyle=dashed, dash=3pt 2pt](0,0)(15,0)(15,15)(0,15)(0,0)
\psline[linestyle=dotted,dotsep=1.5pt](3,3)(12,12)
}
\end{pspicture}
\end{center}
\caption{A general picture of intertwined staircases. The solid blocks represent $\lambda_1$, $\lambda_2$ etc.}
\label{FIG Intertwined Staircases}
\end{figure}
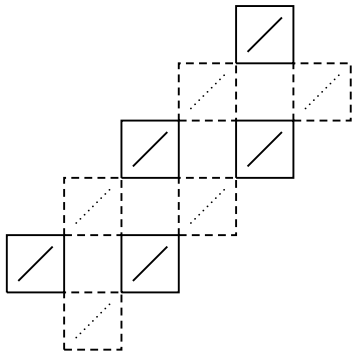

Label the elements of these blocks in the following way:
\begin{itemize}
\item
The elements of $\lambda_1$ are labeled with their values.
\item
For even $i > 1$, each element of $\lambda_i$ is labeled with the largest label of an element of $\lambda_{i-1}$ of smaller value.
\item
For odd $i > 1$, each element of $\lambda_i$ is labeled with the largest label of an element of $\lambda_{i-1}$ to its left.
\end{itemize}
Note that, within each block, if a label occurs in that block, then it labels an interval of elements in the block; and that together with all the elements of the preceding block sharing the same label we obtain an interval by position or value within $\pi$ according to whether the block is of odd or even index.

Our first claim is that if at least $b$ labels occur in $\lambda_k$, then $\pi$ contains a $(k,b)$-generic staircase. This is clear enough: simply choose a set of $b$ labels that occur in $\lambda_k$ and then, for each chosen label, in each $\lambda_i$ for $1 \leq i \leq k$ take the first element carrying that label. The pattern of these elements is that of a $(k,b)$-generic staircase.

So, we assume henceforth that the set $L$ of labels occurring in $\lambda_k$ has fewer than $b$ elements. Let $C$ be its complement (in the set of labels occurring in $\lambda_1$). We claim that if we take $\lambda$ to consist of all elements with labels in $C$ together with all the elements of $\lambda_1$, and take $\beta$ to be the remaining elements of $\pi$, then the number of alternations between $\lambda$ and $\beta$ in the resulting merge is bounded by a function of $k$ and $b$ (independent of $\pi$). 

Consider the elements of $\lambda_1$ through $\lambda_k$ whose labels come from $C$ (there are of course none in $\lambda_k$). They define a certain set of intervals by value and by position in $\pi$. If $x, y \in \lambda_i$ lie in different intervals, then they are separated by an element whose label is in $L$.  Thus, using the note following the definition of labeling, the elements of $C$ belonging to a vertical pair of blocks ($\lambda_{2i}$ and $\lambda_{2i+1}$) project onto at most $|L| + 1$ intervals by position. Similarly, the elements of $C$ in a horizontal pair of blocks project onto at most $|L|+1$ intervals by value. So, within $\pi$ the number of intervals determined by the elements with labels from $C$ is bounded above by $k(|L|+1)/2$ (whether we consider intervals by position or by value). Now add to this set of elements the remaining $|L|$ elements of $\lambda_1$. This might increase the number of intervals by value, but not by more than the number of elements added. If anything, it decreases the number of intervals by position (since the entire block $\lambda_1$ is now included which forms a single interval by position). So, $\lambda_1$ together with elements whose labels come from $C$ determine at most $k(b+1)/2 + b$ intervals either by position or value. We set $\lambda$ to be the pattern of this part, $\beta$ the pattern of the remainder of $\pi$ and then their merge has at most $1+k(b+1)/2 + b$ type changes.
\end{proof}

We have all the tools required to prove Theorem \ref{THM Main Theorem} at this point, but it will still be helpful to approach it gently. The following proposition is not technically required in the main proof, but isolates half of the argument and, we hope, will make it easier to follow the full proof. It is also included for historical accuracy -- this result was proved before the significance of rigid permutations in the main result was understood.

\begin{proposition}\label{Special_Case}
Let $X \subseteq \I_2$, $\beta \in \I_2 \cap \avoid(X)$ and suppose that $\C = \I_2 \cap \avoid(X) \cap \avoid(\beta)$ is an infinite class. Then, the growth rates of $\C$ and $\C' = \I_2 \cap \avoid(X) \cap \avoid(1 \oplus \beta)$ are the same.
\end{proposition}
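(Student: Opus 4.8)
The inclusion is free in one direction: since $\beta \involvedIn 1\oplus\beta$, every permutation avoiding $\beta$ also avoids $1\oplus\beta$, so $\C\subseteq\C'$ and hence $s(\C)\le s(\C')$. All the content is the reverse inequality $s(\C')\le s(\C)$, and the plan is to show that a single bounded-merge class built out of $\C$ contains all of $\C'$, whereupon Theorem~\ref{THM Merge Growth} finishes the job. First I would set up a staircase dichotomy. Since $\beta\in\I_2$ we also have $1\oplus\beta\in\I_2$ (prepending a bottom-left point cannot create a $321$), so by the proposition that every element of $\I_2$ embeds into some generic staircase we may fix $k,b$ with $1\oplus\beta$ occurring in the $(k,b)$-generic staircase; let $B$ be the integer that Proposition~\ref{PROP Staircase or Merge} supplies for these $k,b$. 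For any $\pi\in\C'$, avoiding $1\oplus\beta$ forces $\pi$ to avoid the $(k,b)$-generic staircase, so Proposition~\ref{PROP Staircase or Merge} expresses $\pi$ as a $B$-bounded merge $\pi=\lambda\uplus\mu$ in which $\lambda$ contains every element preceding the minimum $m$ of $\pi$ and $\mu$ contains every element below the first element $a=(1,\pi(1))$. Equivalently, every element of $\lambda$ lies weakly above $a$ and every element of $\mu$ lies weakly to the right of $m$.

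The crux is to locate two $\beta$-avoiding regions forced by the hypothesis on $\pi$. Because $\pi$ avoids $1\oplus\beta$, the set of elements lying strictly above $a$ must avoid $\beta$: any copy of $\beta$ among them would lie entirely above and to the right of $a$, yielding $1\oplus\beta$. Symmetrically, the set of elements lying strictly to the right of $m$ avoids $\beta$, with $m$ supplying the extra bottom-left point. Since $a$ is the unique element of its value and $m$ the unique element of its position, $\lambda\setminus\{a\}$ lies strictly above $a$ and $\mu\setminus\{m\}$ lies strictly to the right of $m$; hence both $\lambda\setminus\{a\}$ and $\mu\setminus\{m\}$ avoid $\beta$. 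As sub-permutations of $\pi$ they also lie in $\I_2\cap\avoid(X)$, so both belong to $\C$. (This reasoning is uniform: in the degenerate case $a=m$ the same two statements still hold and the argument is unaffected.)

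It remains to reassemble. Recolour $a$ and $m$ into a third part; as each of these two elements meets at most two position-boundaries and two value-boundaries, this alters at most eight type-change boundaries, so with $B'=B+8$ the permutation $\pi$ is a $B'$-bounded merge of the three pieces $\lambda\setminus\{a\}\in\C$, $\mu\setminus\{m\}\in\C$, and the two-element piece $\{a,m\}$, which lies in the finite class $\mathcal{U}$ of permutations of length at most two. Taking $\{a,m\}$ as one side and $(\lambda\setminus\{a\})\uplus(\mu\setminus\{m\})$ as the other shows, since deleting elements cannot increase the number of type changes, that $\pi\in\M_{B'}(\M_{B'}(\C,\C),\mathcal{U})$. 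As $\pi\in\C'$ was arbitrary, $\C'\subseteq\M_{B'}(\M_{B'}(\C,\C),\mathcal{U})$, and two applications of Theorem~\ref{THM Merge Growth} give $s(\C')\le\max(s(\C),s(\C),s(\mathcal{U}))=s(\C)$, using $s(\mathcal{U})=0$. Combined with $s(\C)\le s(\C')$ this yields equality.

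I expect the main obstacle to be the matching in the middle paragraph: Proposition~\ref{PROP Staircase or Merge} delivers a purely \emph{structural} splitting (one part living above the bottom value-line through $m$, the other right of the left position-line through $a$), whereas avoiding $1\oplus\beta$ delivers a \emph{combinatorial} fact (the region strictly above $a$ and the region strictly right of $m$ avoid $\beta$). These two descriptions agree except precisely on the two extremal elements $a$ and $m$, which sit on those very lines; recognising that they are the only obstructions, and that peeling them off into a constant-size third merge part repairs the mismatch without affecting growth rates, is the delicate step.
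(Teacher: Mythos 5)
Your proof is correct and follows essentially the same route as the paper's: both fix $(k,b)$ so that $1\oplus\beta$ embeds in a $(k,b)$-generic staircase, apply Proposition~\ref{PROP Staircase or Merge} to each $\pi\in\C'$, and observe that each merge part avoids $\beta$ once its corner element ($a$ or $m$) is removed. The only difference is bookkeeping: the paper notes that each part begins with its minimum and hence lies in $1\oplus\C$, concluding $\pi\in\M_B(1\oplus\C,1\oplus\C)$, whereas you peel the two corner elements into a constant-size third merge part; both devices give the same growth-rate conclusion via Theorem~\ref{THM Merge Growth}.
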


\begin{proof}
Since $\C \subseteq \C'$ it is sufficient to show that $\C' \setminus \C$ is contained in some class (or indeed any set) whose growth rate is not greater than that of $\C$. So, let $\pi \in \C' \setminus \C$ be given. If $\pi$ begins with its minimum, then it belongs to the class $\C \cup (1 \oplus \C)$ and this class has the same growth rate as $\C$ does. Otherwise, since $\pi$ avoids $1 \oplus \beta$, and hence also some generic staircase, it must by Proposition \ref{PROP Staircase or Merge} be a bounded merge of two permutations each avoiding $1 \oplus \beta$ and each beginning with their minimum elements. Since these permutations avoid $1 \oplus \beta$, their patterns after the first element must avoid $\beta$. So, in any case, $\pi$ belongs to a bounded merge of the class $1 \oplus \C$ with itself. Thus $\stanleyWilfLimit (\C) = \stanleyWilfLimit (\C')$ as claimed.
\end{proof}

Now we extend this proposition to a form from which Theorem \ref{THM Main Theorem} will follow by an easy inductive argument.

\begin{proposition}\label{General_Case}
Let $X \subseteq \I_2$, $\alpha, \beta \in \I_2$ and suppose that $\alpha$ is 2-rigid, $\alpha \oplus \beta \in \I_2 \cap \avoid(X)$ and  $\C = \I_2 \cap \avoid(X) \cap \avoid(\alpha \oplus \beta)$ is an infinite class. Then, the growth rates of $\C$ and $\C' = \I_2 \cap \avoid(X) \cap \avoid(\alpha \oplus 1 \oplus \beta)$ are the same.
\end{proposition}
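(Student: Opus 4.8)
The plan is to observe first that $\C\subseteq\C'$ (immediate, since $\alpha\oplus\beta\involvedIn\alpha\oplus1\oplus\beta$), so that it suffices to exhibit a single permutation class of growth rate at most $\stanleyWilfLimit(\C)$ that contains $\C'\setminus\C$; together with the monotonicity of $\stanleyWilfLimit$ and Theorem~\ref{THM Merge Growth} this will force $\stanleyWilfLimit(\C')=\stanleyWilfLimit(\C)$. Assume both $\alpha$ and $\beta$ are non-empty (if one is empty the statement reduces to Proposition~\ref{Special_Case} or to its mirror image). Fix $\pi\in\C'\setminus\C$, so that $\pi$ involves $\alpha\oplus\beta$ but avoids $\alpha\oplus1\oplus\beta$. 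Since $\alpha$ is $2$-rigid, Corollary~\ref{COR leftmost-bottommost} supplies the leftmost-bottommost embedding of $\alpha$ into $\pi$; write $A$ for its image and let $p^*$ and $v^*$ be the largest position and largest value occurring in $A$.

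The set-up hinges on the region $S=\{x\in\pi:\mathrm{pos}(x)>p^*\text{ and }\mathrm{val}(x)>v^*\}$ lying strictly above and to the right of $A$, and on two clean facts. First, $S$ avoids $1\oplus\beta$: a copy of $1\oplus\beta$ inside $S$ would sit above and to the right of $A$, and prefixing $A$ to it would produce $\alpha\oplus1\oplus\beta$ in $\pi$. Second, $\pi\setminus S$ avoids $\alpha\oplus\beta$: if $C_\alpha\oplus C_\beta$ were such a copy inside $\pi\setminus S$, then because $A$ simultaneously minimizes the position and value of every element of an $\alpha$-copy (the leftmost-bottommost property), every element of $C_\beta$ would have position exceeding $p^*$ and value exceeding $v^*$, hence would lie in $S$, contradicting $C_\beta\subseteq\pi\setminus S$. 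Thus $\pi\setminus S\in\C$, while $S$ lies in $\mathcal G:=\I_2\cap\avoid(X)\cap\avoid(1\oplus\beta)$, and Proposition~\ref{Special_Case} yields $\stanleyWilfLimit(\mathcal G)\le\stanleyWilfLimit(\C)$, since $\mathcal G$ has the same growth rate as $\I_2\cap\avoid(X)\cap\avoid(\beta)\subseteq\C$.

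It remains to glue these pieces together efficiently: the claim is that $\pi$ is a $B$-bounded merge of $\pi\setminus S\in\C$ and $S\in\mathcal G$ for a constant $B$ independent of $\pi$, whence $\C'\setminus\C\subseteq\M_B(\C,\mathcal G)$ and $\stanleyWilfLimit(\M_B(\C,\mathcal G))=\max(\stanleyWilfLimit(\C),\stanleyWilfLimit(\mathcal G))=\stanleyWilfLimit(\C)$ by Theorem~\ref{THM Merge Growth}. To bound the type changes I would invoke Proposition~\ref{PROP Staircase or Merge}: as $S$ avoids $1\oplus\beta$ it avoids a suitable $(k,b)$-generic staircase, so $S$ itself carries a bounded-merge staircase structure. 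The remaining, and I expect hardest, point is to control the interface between $S$ and its complement inside $\pi$, namely the elements with position above $p^*$ but value below $v^*$ that interleave by position with $S$. The decisive leverage is that $\pi$ genuinely \emph{contains} $\alpha\oplus\beta$: fixing a copy $B^*$ of $\beta$ in $S$ (one exists, since by the same leftmost-bottommost argument the $\beta$-part of any copy of $\alpha\oplus\beta$ falls inside $S$), no element of $\pi$ can lie simultaneously above-and-right of $A$ and below-and-left of $B^*$, as it would be the articulation point of an $\alpha\oplus1\oplus\beta$. This ``squeeze'' forces the interleaving elements to be few, and combined with the staircase structure of $S$ should bound the number of type changes by a function of $k,b,|\alpha|$ and $|\beta|$ alone. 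Converting this squeezing phenomenon into a clean uniform bound $B$ is the main obstacle, and is exactly where the staircase machinery of Proposition~\ref{PROP Staircase or Merge} must do its work.
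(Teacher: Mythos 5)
Your opening moves are sound and in fact coincide with the paper's: taking the leftmost-bottommost copy $A$ of $\alpha$ (Corollary~\ref{COR leftmost-bottommost}), observing that the set $S$ of points above and to the right of $A$ avoids $1\oplus\beta$, and observing that $\pi\setminus S$ avoids $\alpha\oplus\beta$ are all correct. (Minor aside: you do not need Proposition~\ref{Special_Case} to get $\stanleyWilfLimit(\mathcal{G})\le\stanleyWilfLimit(\C)$; since $\alpha$ is non-empty, avoiding $1\oplus\beta$ already implies avoiding $\alpha\oplus\beta$, so $\mathcal{G}\subseteq\C$ outright, which also sidesteps the infiniteness hypothesis of that proposition, which you never verified.) The fatal gap is exactly the step you defer: the claim that $\pi$ is a $B$-bounded merge of $S$ and $\pi\setminus S$ with $B$ independent of $\pi$ is \emph{false}, and the ``squeeze'' cannot repair it, because the elements that cause unbounded interleaving lie to the right of $A$ but \emph{below} its top value (the paper's quadrant IV); such elements are not above-and-to-the-right of $A$, so the articulation-point argument places no constraint on them at all. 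Concretely, take $X=\emptyset$, $\alpha=\beta=21$, and for each $m$ let
\[
\pi_m \;=\; (m+2)\;\; 1\;\; 2\;\; (m+4)\;\; 3\;\; (m+5)\;\; 4\;\; (m+6)\;\cdots\; (m+1)\;\; (2m+3)\;\; (2m+4)\;\; (m+3),
\]
which for $m=3$ reads $5\,1\,2\,7\,3\,8\,4\,9\,10\,6$. Here $\pi_m$ avoids $321$ (no entry is both the top of one inversion and the bottom of another), contains $2143=\alpha\oplus\beta$ (first two and last two entries), and avoids $21354=\alpha\oplus1\oplus\beta$: the region above and to the right of every inversion has pattern either increasing or increasing-followed-by-its-minimum ($23\cdots k1$), and both avoid $132$. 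The leftmost-bottommost $21$ is the initial $(m+2)\,1$, so $S$ is precisely the set of entries of value exceeding $m+2$, and these alternate in position with the entries $2,3,\ldots,m+1$ of $\pi_m\setminus S$, forcing $2m-1$ type changes by position. Hence no uniform $B$ exists and the inclusion $\C'\setminus\C\subseteq\M_B(\C,\mathcal{G})$ on which your proof rests is unavailable.

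The paper's proof avoids this trap by refusing to use $S$ versus $\pi\setminus S$ as the two sides of the merge. It applies Proposition~\ref{PROP Staircase or Merge} \emph{twice}: once to $S$ (which avoids $1\oplus\beta$, hence a generic staircase), and once to $\pi\setminus S$ after a $180^\circ$ rotation (it avoids the rotation of $\alpha\oplus1\oplus\beta$), writing each as a bounded merge of two parts; it then recombines the four parts crosswise into two components, each of which mixes elements of $S$ with elements of $\pi\setminus S$. In the example above, the increasing run $2,3,\ldots,m+1$ lands in the same component as the large entries it interleaves with, which is what keeps the number of type changes bounded. The price is that the components are no longer obviously in $\C$, and the paper needs a further argument -- using the leftmost element $l$ of $S$ together with the leftmost-bottommost property of $A$ -- to show that neither mixed component contains $\alpha\oplus\beta$. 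That choice of bipartition and that avoidance argument are the real content of the proof, and both are absent from your proposal.
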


\begin{proof}
We proceed as in the previous proposition. Let $\pi \in \C' \setminus \C$. Since $\pi$ contains an embedded copy of $\alpha \oplus \beta$, it contains such a copy in which the $\alpha$ pattern is witnessed by the leftmost-bottommost copy of $\alpha$ in $\pi$ (whose existence is assured by Corollary \ref{COR leftmost-bottommost}). The general disposition of $\pi$ is then as shown in Figure \ref{FIG bigPic}.

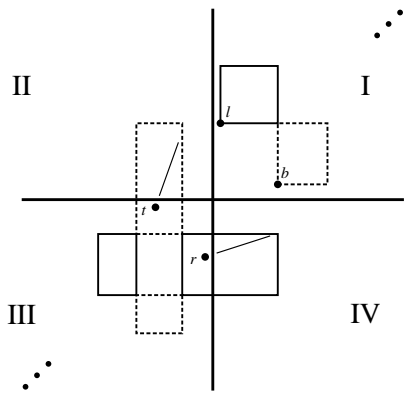
\begin{figure}[ht]
\begin{center}
\psset{xunit=0.02in, yunit=0.02in}
\psset{linewidth=0.015in}
\begin{pspicture}(0,0)(100,100)
\psline(0,50)(100,50)
\psline(50,0)(50,100)
\psset{linewidth=0.01in}
\pscircle*(1,1){0.015in}
\pscircle*(4,4){0.015in}
\pscircle*(7,7){0.015in}
\pscircle*(93,93){0.015in}
\pscircle*(96,96){0.015in}
\pscircle*(99,99){0.015in}
\pscircle*(35,48){0.02in} 
\pscircle*(48,35){0.02in} 
\pscircle*(52,70){0.02in} 
\pscircle*(67,54){0.02in} 
\psline[linestyle=dashed, dash=1.5pt 0.9pt](30,41)(42,41)(42,70)(30,70)(30,41) 
\psline(52,70)(67,70)(67,85)(52,85)(52,70) 
\psline(42,41)(67,41)(67,25)(42,25)(42,41) 
\psline[linestyle=dashed, dash=1.5pt 0.9pt](67,54)(80,54)(80,70)(67,70)(67,54) 
\psline(30,41)(30,25)(20,25)(20,41)(30,41) 
\psline[linestyle=dashed, dash=1.5pt 0.9pt](42,25)(42,15)(30,15)(30,25)(42,25) 
\psset{linewidth=0.006in}
\psline(36,51)(41,65) 
\psline(51,36)(65,40.5) 
{\small
\rput[c](90,80){I}
\rput[c](0,80){II}
\rput[c](0,20){III}
\rput[c](90,20){IV}
}
{\tiny
\rput[c](32,47){$t$}
\rput[c](45,34){$r$}
\rput[c](54,73){$l$}
\rput[c](69,57){$b$}
}
\end{pspicture}
\end{center}
\caption{The structure of $\pi$ containing $\alpha \oplus \beta$. The leftmost-bottommost $\alpha$ is contained in quadrant III. Its maximum is the element $t$ and its rightmost element $r$. Quadrant I with leftmost element $l$ and minimum $b$, contains a copy of $\beta$. All of $\pi$ can be represented as a bounded merge of two permutations, one part of which contains the solid boxes and the other the dotted boxes.}
\label{FIG bigPic}
\end{figure}

If quadrant I began with its minimum element, we could represent $\pi$ as the merge of two permutations -- that singleton element, and the rest. Those remaining elements would have to avoid the pattern $\alpha \oplus \beta$ as otherwise using the leftmost-bottomost $\alpha$, the singleton element, and any copy of $\beta$ which is part of an $\alpha \oplus \beta$ we would have $\alpha \oplus 1 \oplus \beta \involvedIn \pi$. So we may assume that the situation is as shown in the figure, that is that the leftmost element of quadrant I (marked $l$) and its minimum (marked $b$) are distinct.

As before, we can decompose quadrant I containing $\beta$ into a pair of intertwined staircases, and thus represent it as a bounded merge of two permutations (since it must avoid $1 \oplus \beta$ and hence some generic staircase). The remainder of the permutation consists of the part in quadrant III bounded by the topmost and rightmost points of the copy of $\alpha$, together with two increasing segments (either or both of which may be empty) in quadrants II and IV as shown. This subset of $\pi$ must avoid $\alpha \oplus 1 \oplus \beta$ and so can also be written as a bounded merge of two permutations, one containing the solid rectangle to which $r$ belongs, and the other the dotted rectangle to which $t$ belongs, as shown in the figure. Here we use Proposition \ref{PROP Staircase or Merge} applied to the pattern of these elements obtained by a $180^\circ$ degree rotation of the graph.

These two bounded merges can be combined into a single bounded merge which represents the entire permutation $\pi$. We will now show that neither of the components of this merge contains a copy of $\alpha \oplus \beta$. Suppose, for the sake of argument, that the component, $\sigma$, represented by the solid boxes contained this pattern, on a subset $\theta$ containing the leftmost-bottommost copy of $\alpha$ in $\sigma$. The leftmost-bottommost copy of $\alpha$ in $\sigma$ would extend strictly above the leftmost-bottommost copy of $\alpha$ in $\pi$, since $\sigma$ does not contain the topmost element ($t$) of the leftmost-bottommost copy of $\alpha$ in $\pi$. So, the copy, $\beta'$, of $\beta$ in $\theta$ lying above this copy of $\alpha$ could not include the leftmost element ($l$) of quadrant I; as all the elements of  $\pi$ larger than $t$ either lie in the other part of the merge, or properly within quadrant I. Therefore, $\beta'$ lies strictly above and to the right of $l$. However, $\alpha'$, the leftmost-bottommost copy of $\alpha$ in $\pi$ lies strictly below and to the left of $l$. In that case the pattern of $\alpha' \cup \{l\} \cup \beta'$ is $\alpha \oplus 1 \oplus \beta$, providing a contradiction as $\pi$ avoids this permutation. The argument that the other part of the merge cannot contain $\alpha \oplus \beta$ is similar.

Hence, any element of $\C' \setminus \C$ is a bounded merge of two permutations in $\C$ and thus the growth rates of $\C'$ and $\C$ are the same.
\end{proof}

Now finally:

\begin{proof-main}
Without loss in generality we may assume that $X\subseteq \I_2$.  Furthermore we may assume that $X$ does not contain any increasing permutation and so the class $\I_2 \cap \avoid(X)$ is infinite (the result is, of course, trivial if this class is finite). If $\rigidReduction(X) = X$ there is nothing to prove. Otherwise, $X$ contains at least one permutation, $\tau$, having an articulation point. Write $\tau = \alpha \oplus 1 \oplus \beta$ where $\alpha$ is either rigid or empty (that is, decompose $\tau$ around its first articulation point). Let $\tau' = \alpha \oplus \beta$ and $X' = (X \setminus \{ \tau \}) \cup \{ \tau' \}$. Then, by one of the two preceding propositions
\[
\stanleyWilfLimit(\I_2 \cap \avoid(X)) = \stanleyWilfLimit(\I_2 \cap \avoid(X')).
\]
After a series of such reductions (formally, by induction on the number of articulation points occurring among the elements of $X$) we obtain the desired conclusion.
\end{proof-main}

\section{The lattice of embeddings of \texorpdfstring{\boldmath $21$}{21} in an element of \texorpdfstring{\boldmath $\I_2$}{I2}}
\label{aftermath}

Theorem \ref{THM Lattice Structure} showed that the embeddings of a $k$-rigid permutation $\rho$ into an element of $\I_k$ form a distributive lattice. The case $k = 2$, and $\rho = 21$ is particularly interesting. The union of the images of $21$ in a permutation $\pi \in \I_2$ forms exactly the rigid reduction of $\pi$, so we interest ourselves only in the case where $\pi$ is $2$-rigid, and we set $L_{\pi}$ to be the distributive lattice of copies of $21$ in $\pi$. Restricting further, we consider as fixed the number, $m$, of rank 2 elements in $\pi$ and also the number, $n$ of rank $1$ elements, and we represent these by the chain $[m] = \{1,2,3,\ldots,m\}$ and $[n] = \{1,2,3,\ldots,n\}$ respectively. We suppress a necessary distinction between these chains according to the rank of the corresponding elements, since this is always clear from context.
Then $L_{\pi}$ forms a sublattice of $[m] \times [n]$, where $(i,j) \in L_\pi$ if and only if the $i^{\mbox{\scriptsize th}}$ element of rank $2$ and the $j^{\mbox{\scriptsize th}}$ element of rank $1$ form a $21$-pattern. In particular, if $\pi= (n+1)\cdots (n+m) 1\cdots n$, then $L_{\pi} = [m] \times [n]$. Another example is shown in Figure \ref{FIG PermAndLattice}.

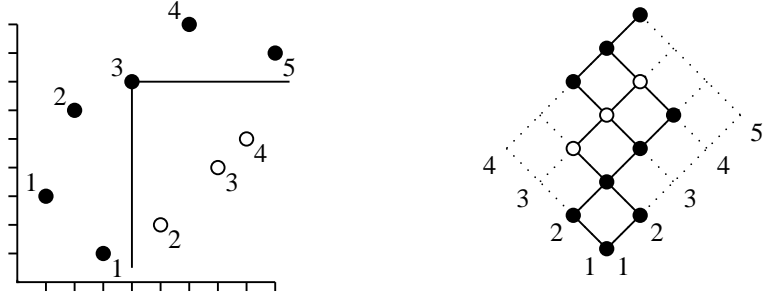
\begin{figure}
\begin{center}
\begin{tabular}{ccc}
\psset{xunit=0.015in, yunit=0.015in}
\psset{linewidth=0.01in}
\begin{pspicture}(0,0)(100,100)
\psaxes[dy=10,Dy=1,dx=10,Dx=1,tickstyle=bottom,showorigin=false,labels=none](0,0)(90,90)
\pscircle*(10,30){0.04in}
\pscircle*(20,60){0.04in}
\pscircle*(30,10){0.04in}
\pscircle*(40,70){0.04in}
\pscircle[fillstyle=solid,fillcolor=white](50,20){0.04in}
\pscircle*(60,90){0.04in}
\pscircle[fillstyle=solid,fillcolor=white](70,40){0.04in}
\pscircle[fillstyle=solid,fillcolor=white](80,50){0.04in}
\pscircle*(90,80){0.04in}
{\footnotesize %
\rput[c](5,35){$1$}
\rput[c](15,65){$2$}
\rput[c](35,75){$3$}
\rput[c](55,95){$4$}
\rput[c](35,5){$1$}
\rput[c](55,15){$2$}
\rput[c](75,35){$3$}
\rput[c](85,45){$4$}
\rput[c](95,75){$5$}}
\psline(40,5)(40,70)(95,70)
\end{pspicture}
&\rule{40pt}{0pt}&
\psset{xunit=0.0175in, yunit=0.0175in}
\psset{linewidth=0.01in}
\begin{pspicture}(-10,-10)(80,80)
\psline[linestyle=dotted](0,30)(40,70)
\psline[linestyle=dotted](10,20)(50,60)
\psline[linestyle=dotted](20,10)(60,50)
\psline[linestyle=dotted](30,0)(70,40)
\psline[linestyle=dotted](30,0)(0,30)
\psline[linestyle=dotted](40,10)(10,40)
\psline[linestyle=dotted](50,20)(20,50)
\psline[linestyle=dotted](60,30)(30,60)
\psline[linestyle=dotted](70,40)(40,70)
\psline(30,0)(40,10)
\psline(20,10)(50,40)
\psline(20,30)(40,50)
\psline(20,50)(40,70)
\psline(30,0)(20,10)
\psline(40,10)(20,30)
\psline(40,30)(20,50)
\psline(50,40)(30,60)
\pscircle*(30,0){0.04in}
\pscircle*(20,10){0.04in}
\pscircle*(40,10){0.04in}
\pscircle*(30,20){0.04in}
\pscircle[fillstyle=solid,fillcolor=white](20,30){0.04in}
\pscircle*(40,30){0.04in}
\pscircle[fillstyle=solid,fillcolor=white](30,40){0.04in}
\pscircle*(50,40){0.04in}
\pscircle*(20,50){0.04in}
\pscircle[fillstyle=solid,fillcolor=white](40,50){0.04in}
\pscircle*(30,60){0.04in}
\pscircle*(40,70){0.04in}
{\footnotesize %
\rput[c](25,-5){$1$}
\rput[c](15,5){$2$}
\rput[c](5,15){$3$}
\rput[c](-5,25){$4$}
\rput[c](35,-5){$1$}
\rput[c](45,5){$2$}
\rput[c](55,15){$3$}
\rput[c](65,25){$4$}
\rput[c](75,35){$5$}}
\end{pspicture}
\end{tabular}
\end{center}
\caption{The permutation $\pi=361729458$ and its corresponding lattice, with the interval $D(3)=\{2,3,4\}$ of rank $1$ points highlighted.}
\label{FIG PermAndLattice}
\end{figure}

Recall that if $A$ and $B$ are algebraic structures, then a subalgebra $C \leq A \times B$ is called a \emph{subdirect product} of $A$ and $B$ if the projections from $C$ to $A$ and to $B$ are both surjective. The lattice $L_\pi$ is always a subdirect product of $[m]$ and $[n]$ since every element is part of some $21$. Also it is clear that if $\pi \neq \pi'$, then $L_{\pi} \neq L_{\pi'}$, since all the order relationships of $\pi$ are determined by $L_{\pi}$.

Now suppose that $K$ is an arbitrary subdirect product of $[m]$ and $[n]$. For $a \in [m]$ define $D_K(a) = \{ p \in [n] \,: \, (a,p) \in K\}$. The following observation is certainly folkloric:

\begin{observation}
For all $a \in [m]$, $D_K(a)$ is a non-empty interval in $[n]$. Furthermore if $a, b \in [m]$ with $a < b$ then $\min D_K(a) \leq \min D_K(b)$ and $\max D_K (a) \leq \max D_K(b)$.
\end{observation}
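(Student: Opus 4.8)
The plan is to work directly with the lattice structure of $[m] \times [n]$, in which meet and join are computed coordinatewise: $(a,p) \wedge (b,q) = (\min(a,b), \min(p,q))$ and $(a,p) \vee (b,q) = (\max(a,b), \max(p,q))$. Since $K$ is a sublattice it is closed under both operations, and each of the three claims will be extracted by applying a single meet or join to two well-chosen members of $K$. Non-emptiness of $D_K(a)$ is immediate: because $K$ is a subdirect product, the projection to $[m]$ is surjective, so some pair $(a,p)$ lies in $K$.

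For the interval property I would fix $a$ and suppose $(a,p), (a,q) \in K$ with $p < r < q$; the goal is to show $(a,r) \in K$. Here I use surjectivity of the \emph{second} projection (this is the only place subdirectness in the second coordinate is needed) to obtain some $(c,r) \in K$. A short case split on $c$ versus $a$ then finishes the argument: if $c \le a$ then $(c,r) \vee (a,p) = (a,r)$, while if $c \ge a$ then $(c,r) \wedge (a,q) = (a,r)$; in either case $(a,r) \in K$. I expect this step --- in particular the observation that surjectivity of the second projection is exactly what supplies the missing witness $(c,r)$ --- to be the crux of the argument, since without subdirectness $D_K(a)$ need not be an interval (for instance a four-element ``corner'' sublattice of $[3]\times[3]$ has a gap).

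For the monotonicity of the endpoints I would take $a < b$ and write $p_a = \min D_K(a)$, $P_a = \max D_K(a)$, and similarly $p_b, P_b$, so that $(a,p_a), (a,P_a), (b,p_b), (b,P_b) \in K$. To see $p_a \le p_b$, form the meet $(a,p_a) \wedge (b,p_b) = (a, \min(p_a,p_b))$, which lies in $K$; hence $\min(p_a,p_b) \in D_K(a)$, forcing $\min(p_a,p_b) \ge p_a$ and therefore $p_a \le p_b$. Dually, to see $P_a \le P_b$, form the join $(a,P_a) \vee (b,P_b) = (b, \max(P_a,P_b)) \in K$; hence $\max(P_a,P_b) \in D_K(b)$, forcing $\max(P_a,P_b) \le P_b$ and therefore $P_a \le P_b$. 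These two computations are routine once the coordinatewise description of meet and join is in hand, so the only genuine obstacle is the interval step above.
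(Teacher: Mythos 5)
Your proof is correct and takes essentially the same approach as the paper's: the interval property comes from using surjectivity of the second projection to supply a witness $(c,r) \in K$ followed by a case split on $c$ versus $a$ resolved by a single join or meet, and the endpoint monotonicity comes from coordinatewise meets and joins of elements of $D_K(a)$ and $D_K(b)$. The differences are purely cosmetic (explicit notation for the endpoints, and your added remark that subdirectness is genuinely needed for the interval property).
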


\begin{proof}
For the first part, suppose that $p \leq q \leq r$ and $p, r \in D_k(a)$. Then, $(b,q) \in K$ for some $b$, because $K$ is subdirect. If $b \leq a$ then $(a,q) = (b, q) \vee (a, p)$, while if $ a < b$ then $(a,q) = (b,q) \wedge (a,r)$. In either case, $q \in D_K(a)$. The second part is immediate as well, for if $(a, p) \in K$ and $(b, q) \in K$ then $(a, p \wedge q), (b, p \vee q) \in K$.
\end{proof}

Using this observation we can construct, given $K$, a permutation $\Pi(K)$ as follows: begin with an increasing sequence of length $n$. Now, for $a \in [m]$ place a new element just to the left of $\min D(a)$ and just above $\max D(a)$ (and also above all previously placed elements of this sort). The conditions of the observation guarantee that such a placement is always possible. It is also clear that $L_{\Pi(K)} = K$. Thus we obtain:

\begin{theorem}
The 2-rigid elements of $\I_2$ having $m$ elements of rank 2 and $n$ elements of rank $1$ are in one-to-one correspondence with the subdirect products of $[m]$ and $[n]$.
\end{theorem}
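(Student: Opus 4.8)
The plan is to exhibit the two maps $\pi \mapsto L_\pi$ and $K \mapsto \Pi(K)$ and to check they are mutually inverse. The forward direction is almost in hand. By Theorem~\ref{THM Lattice Structure} (with $k=2$ and $\rho = 21$) the copies of $21$ in a $2$-rigid $\pi \in \I_2$ form a distributive lattice; identifying the top of a copy with its rank-$2$ index in $[m]$ and the bottom with its rank-$1$ index in $[n]$ turns the coordinatewise infimum and supremum of embeddings into the coordinatewise meet and join of $[m] \times [n]$, so $L_\pi$ is a sublattice, and $2$-rigidity makes both projections surjective, so $L_\pi$ is subdirect. Injectivity of $\pi \mapsto L_\pi$ is the remark already noted: the relative order of any two elements of $\pi$ is recorded by whether the corresponding pair lies in $L_\pi$, so $L_\pi$ determines $\pi$.

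It remains to prove surjectivity, and for this I would verify that $\Pi(K)$ does the job. Write $p_1, \ldots, p_n$ for the initial increasing chain (the prospective rank-$1$ points) and $q_1, \ldots, q_m$ for the points added for $a = 1, \ldots, m$, where $q_a$ sits just left of $p_{\min D_K(a)}$ and just above $p_{\max D_K(a)}$ and all earlier $q$'s. First I would confirm the construction is consistent: since the observation gives $\min D_K(a)$ and $\max D_K(a)$ nondecreasing in $a$, the value of $q_a$ can be chosen strictly between $p_{\max D_K(a)}$ and $p_{\max D_K(a)+1}$ (with $p_{n+1} = +\infty$) while still lying above every earlier $q_b$, whose value slot is no higher by monotonicity; and likewise its position strictly between $p_{\min D_K(a)-1}$ and $p_{\min D_K(a)}$. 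In particular the $q_a$ form an increasing sequence, so together with the increasing chain $\{p_j\}$ they exhibit $\Pi(K) \in \I_2$ as a union of two increasing sets. Because $K$ is subdirect, every $D_K(a)$ is non-empty (so each $q_a$ lies in a $21$) and every $j \in [n]$ lies in some $D_K(a)$ (so each $p_j$ lies in a $21$); thus $\Pi(K)$ is $2$-rigid.

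Next I would pin down the ranks. Each $q_a$ is the top of a $21$, hence has rank $2$; and no $p_j$ is the top of a $21$, since the only points following $p_j$ are later chain points (larger value) and the $q_a$ with $\min D_K(a) > j$, which also have larger value as $q_a$ lies above $p_{\max D_K(a)} > p_j$. So nothing smaller follows $p_j$ and it has rank $1$, giving exactly $m$ rank-$2$ and $n$ rank-$1$ points with the $q$'s indexed by position matching $[m]$ and the $p$'s matching $[n]$. Finally $q_a$ forms a $21$ with $p_j$ exactly when it lies left of and above $p_j$, i.e.\ when $\min D_K(a) \le j \le \max D_K(a)$; as $D_K(a)$ is an interval this says precisely $j \in D_K(a)$, i.e.\ $(a,j) \in K$. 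Hence $L_{\Pi(K)} = K$, and the two maps are mutually inverse.

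I expect the main obstacle to be the bookkeeping in the surjectivity step rather than any single hard idea: one must check that the placement rule for the $q_a$ is genuinely realizable simultaneously for all $a$ (this is exactly where the two monotonicity clauses of the observation are used, to keep each $q_a$ just above $p_{\max D_K(a)}$ yet above all earlier $q$'s), and that the rank computation comes out as claimed, so that the indexing of $L_{\Pi(K)}$ by $[m] \times [n]$ agrees with that of $K$ and no spurious or missing $21$-patterns appear.
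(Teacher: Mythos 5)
Your proposal is correct and follows essentially the same route as the paper: the paper also establishes the bijection via $\pi \mapsto L_{\pi}$ (subdirectness from $2$-rigidity, injectivity because $L_\pi$ determines the order relations of $\pi$) and proves surjectivity by the construction $K \mapsto \Pi(K)$ guaranteed by the Observation on the intervals $D_K(a)$. The only difference is one of detail: the paper's proof is a single sentence pointing back to this discussion and asserts $L_{\Pi(K)}=K$ as ``clear,'' whereas you carry out the bookkeeping (consistency of the placement of the $q_a$, membership in $\I_2$, the rank computation, and the verification $L_{\Pi(K)}=K$) explicitly and correctly.
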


\begin{proof}
We have noted that the association $\pi \mapsto L_{\pi}$ is both one-to-one and onto.
\end{proof}

Releasing the restrictions on $m$ and $n$ we see that every subdirect product of two finite chains is equal to $L_{\pi}$ for a unique 2-rigid permutation $\pi \in \I_2$. However, for 3-rigid permutations in $\I_3$ no such result holds. For example, there are 29 subdirect products of three chains of length 2, but only 25 permutations that are 3-rigid of size 6 with 2 elements of each rank. In fact, even among these permutations there are duplications in their corresponding lattices. 

A permutation is 2-rigid if it is covered by its embedded copies of 21. We noted above that we could count the number of 2-rigid permutations in $\I_2$ and we might well consider what we can say about permutations satisfying some stronger conditions. For example, we might call $\pi \in \I_2$ \emph{$k$-good} if every point of $\pi$ lies in a copy of $\iota_k \ominus \iota_k$. Thus, a 1-good permutation is 2-rigid, and vice versa. We do not have a complete enumeration of this collection of permutations, but the following result is amusing:

\begin{lemma}
\label{lem-k-good-perms}
There are $\binom{2\ell}{\ell}$ $k$-good permutations of length $2k+\ell$ for $0\leq \ell \leq k$.
\end{lemma}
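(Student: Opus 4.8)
The plan is to encode a $k$-good permutation by the way its points of rank $1$ and rank $2$ are interleaved, and to show that the $k$-good condition forces a rigid configuration near the four ``corners'' of the diagram while leaving exactly $\binom{2\ell}{\ell}$ choices in the middle. The engine of the whole argument is a single structural fact: in any $\pi \in \I_2$ a point of rank $2$ has no point lying above and to its left, while a point of rank $1$ (equivalently, a right-to-left minimum) has no point lying below and to its right --- otherwise a $321$ pattern would appear. Consequently, in \emph{every} embedded copy of $\iota_k \ominus \iota_k$ the $k$ points of the upper block must all have rank $2$ and the $k$ points of the lower block must all have rank $1$: a point of the upper block has a lower-block point to its lower-right, so it cannot be a right-to-left minimum, and dually for the lower block. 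Hence a rank-$2$ point can only ever be covered as an upper-block point, forcing it to have at least $k$ points below and to its right, and a rank-$1$ point can only be covered as a lower-block point, forcing at least $k$ points above and to its left.

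From these two observations the necessary conditions drop out immediately. With $n=2k+\ell$: the first $k$ positions must carry rank $2$ and the last $k$ positions rank $1$ (a rank-$1$ point among the first $k$ positions, or a rank-$2$ point among the last $k$, cannot gather $k$ points to the required side), and symmetrically the $k$ smallest values must have rank $1$ and the $k$ largest rank $2$. I would then encode $\pi$ by its \emph{position word} (the ranks read left to right) and its \emph{value word} (the ranks read bottom to top). The conditions above fix the first $k$ and the last $k$ letters of each word, and the only remaining freedom is the choice of which of the $\ell$ middle positions, and which of the $\ell$ middle values, carry rank $2$. Since the number of rank-$2$ points is the same counted by position or by value, these two choices must select the same number $a$ of middle entries; summing over $a$ gives $\sum_{a=0}^{\ell}\binom{\ell}{a}^2=\binom{2\ell}{\ell}$ candidate pairs.

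It remains to show that each candidate pair really comes from a unique $k$-good permutation. Reconstructing $\pi$ (rank-$2$ positions receive the rank-$2$ values in increasing order, and likewise for rank $1$) automatically yields a $321$-avoiding permutation, being a union of two increasing sequences. For $k$-goodness I would exhibit just two covering copies of $\iota_k \ominus \iota_k$: the first $k$ rank-$2$ points together with the $k$ smallest values, and the $k$ largest values together with the last $k$ rank-$1$ points. One checks directly that each is a genuine copy, and that --- using $\ell \le k$, so that both $a \le k$ and $\ell - a \le k$ --- the index ranges they cover overlap and exhaust all the points, so every point lies in one of these two copies.

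The step I expect to be the main obstacle is consistency: verifying that the ranks prescribed by the two words are the \emph{actual} ranks of the reconstructed permutation (which also gives injectivity, since a permutation determines its own rank words). I anticipate this reduces to a ballot-type inequality comparing the two words --- the $m$-th rank-$1$ point must occupy a position at least as large as its value --- and it is exactly here that the hypothesis $\ell \le k$ becomes indispensable. It forces the ``middle'' range of rank-$1$ indices to be empty, so the inequality holds automatically and every candidate pair is realised. Without $\ell \le k$ this consistency step fails for some pairs, the reconstruction is no longer canonical, and the true count drops below $\binom{2\ell}{\ell}$; pinning down this boundary is the crux of the proof.
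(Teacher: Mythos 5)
Your proposal is correct and takes essentially the same route as the paper: your rank-$2$/rank-$1$ position and value words are precisely the paper's ``upper and lower lines,'' whose end segments are forced in both arguments, and both reduce the count to the two independent middle interleavings (one by position, one by value), giving $\sum_{j=0}^{\ell}\binom{\ell}{j}^2=\binom{2\ell}{\ell}$ --- your explicit two-copy verification that every candidate pair is realised is somewhat more careful than the paper's, but it is the same method, not a different one. One caveat on your closing side remark only: the count does \emph{not} drop below $\binom{2\ell}{\ell}$ at $\ell=k+1$ (the paper notes immediately after the lemma that the total is still $\binom{2\ell}{\ell}$ there, the two boundary terms coincidentally equalling $1=\binom{\ell}{0}^2=\binom{\ell}{\ell}^2$); the drop first occurs for $\ell\geq k+2$.
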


\begin{proof}
Let $a_j$ denote the number of $k$-good permutations of length $2k+\ell$ for which there are $k+j$ points lying on the upper line (and subsequently $k+\ell-j$ on the lower). Note first that $a_j=0$ for every $j>\ell$, as then there are fewer than $k$ points on the lower line. Thus we need only consider values of $j$ satisfying $0\leq j\leq \ell$. 

Supposing $\pi$ is such a permutation, divide each line into three sections: from left to right, the upper line is divided into (possibly empty) parts of sizes $j$, $k-j$ and $j$, and the lower into $\ell -j$, $k-\ell+j$ and $\ell-j$. Note that the condition $\ell\leq k$ ensures that this division is possible. Since $\pi$ is $k$-good, the middle sections of each line (of sizes $k-j$ and $k-\ell+j$) cannot interact: the leftmost $k$ points of each of the upper and lower lines must together form a copy of $\iota_k\ominus \iota_k$, and so the middle section of each line cannot interact with the leftmost section of the other. Similarly, the rightmost $k$ points of each line must also form an $\iota_k\ominus\iota_k$ , and hence the middle section of each line cannot interact with the rightmost section of the other. Trivially, these two conditions also prevent the middle sections from interacting with each other.

Thus $a_j$ counts the number of ways of simultaneously interleaving the rightmost part of the upper line with the leftmost part of the lower line vertically, and the leftmost of the upper with the rightmost of the lower horizontally. Up to symmetry these two interleavings are the same, so we consider only the former. Note that these two sections contain a total of $\ell$ points, and so there are $\binom{\ell}{j}$ possible interleavings. Hence $a_j=\binom{\ell}{j}^2$, and so there are $\sum_{j=0}^\ell \binom{\ell}{j}^2 = \binom{2\ell}{\ell}$ such permutations.
\end{proof}

It is worth noting that there are also $\binom{2\ell}{\ell}$ $k$-good permutations of length $2k+\ell$ when $\ell=k+1$: the argument in the proof of Lemma~\ref{lem-k-good-perms} still works for $j$ satisfying $1\leq j\leq k$ (i.e.\ $a_j=\binom{\ell}{j}^2$). When $j=0$, the upper line contains exactly $k$ points and there is only one such $k$-good permutation of each length of this form, giving $a_0=1$. Similarly, when $j=\ell=k+1$ the lower line contains exactly $k$ points, and again we always have $a_{k+1}=1$.

\section{Further remarks}

We have been unable to extend the main result of Section~\ref{main section} to apply to the classes $\I_k$ with $k\geq 3$. This is largely because there seems to be no analog to the ``generic staircase'' which we require in order to obtain bounded merges. Indeed, Waton's doctoral thesis \cite{Waton07} points to a fundamental difference between $\I_2$ and $\I_3$.  He considered their subclasses from a combinatorial-geometric point of view.  In his work $\I_2$ arises as the set of all permutations drawn on two fixed arbitrary parallel lines. By way of contrast, permutations drawn on three parallel lines form a proper subclass of $\I_3$, and there are uncountably many such classes, depending on the relative position of the three lines.  Despite this we have managed to prove a weaker form of the result  (generalizing an unpublished observation of M. B\'{o}na):

\begin{proposition}
For any $k$, $\alpha$ and $\beta$, and set of permutations $X$, the growth rates of $\I_k\cap\avoid(X, \alpha \oplus 1 \oplus \beta)$ and $\I_k\cap\avoid( X, \alpha \oplus 1 \oplus 1 \oplus \beta)$ are the same.
\end{proposition}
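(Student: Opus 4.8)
The plan is to mimic Propositions~\ref{Special_Case} and \ref{General_Case}: show that every element of $\C' \setminus \C$ is a bounded merge of two members of $\C$, where $\C = \I_k \cap \avoid(X, \alpha\oplus 1\oplus\beta)$ and $\C' = \I_k \cap \avoid(X, \alpha\oplus 1\oplus 1\oplus\beta)$, with a merge bound depending only on $k$. Since $\alpha\oplus 1\oplus\beta \involvedIn \alpha\oplus 1\oplus 1\oplus\beta$ we have $\C \subseteq \C'$ and hence $\stanleyWilfLimit(\C) \le \stanleyWilfLimit(\C')$; the real content is the reverse inequality. The crucial feature is that this argument uses no generic staircase, which is exactly what allows it to work for arbitrary $k$ rather than only $k=2$.

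Fix $\pi \in \C' \setminus \C$ and call a point $x \in \pi$ a \emph{pivot} if the points of $\pi$ strictly below and to the left of $x$ contain a copy of $\alpha$ while the points strictly above and to the right of $x$ contain a copy of $\beta$; equivalently, $x$ can serve as the central singleton of an embedded $\alpha\oplus 1\oplus\beta$. Let $M$ be the set of pivots. The first key step is that $M$ is a decreasing subset of $\pi$: if pivots $x$ and $y$ formed a $12$ pattern with $x$ below and left of $y$, then a copy of $\alpha$ below-left of $x$, the pair $x,y$, and a copy of $\beta$ above-right of $y$ would witness $\alpha\oplus 1\oplus 1\oplus\beta \involvedIn \pi$, contradicting $\pi \in \C'$. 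Because $\pi \in \I_k$ avoids $\delta_{k+1}$, a decreasing subset has at most $k$ elements, so $|M| \le k$.

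The second key step is that both $M$ and $\pi \setminus M$ avoid $\alpha\oplus 1\oplus\beta$ (they lie in $\I_k \cap \avoid(X)$ automatically, being subsets of $\pi$, so this places both in $\C$). For $M$ it is immediate, since $M$ is decreasing whereas $\alpha\oplus 1\oplus\beta$ contains a $12$. For $\pi \setminus M$: any embedded copy of $\alpha\oplus 1\oplus\beta$ there is also such a copy in $\pi$, and its central singleton is by definition a pivot, hence lies in $M$ --- impossible, as that point would have to lie in $\pi \setminus M$. Finally $\pi$ is the merge of the patterns of $M$ and of $\pi \setminus M$, and since $M$ occupies at most $k$ positions and at most $k$ values this merge has at most $2k$ type changes by position and $2k$ by value, i.e.\ at most $4k$ in total, a bound independent of $\pi$, $\alpha$, and $\beta$. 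Thus $\C' \subseteq \M_{4k}(\C,\C)$, and Theorem~\ref{THM Merge Growth} gives $\stanleyWilfLimit(\C') \le \stanleyWilfLimit(\M_{4k}(\C,\C)) = \stanleyWilfLimit(\C)$, which together with $\stanleyWilfLimit(\C) \le \stanleyWilfLimit(\C')$ yields equality.

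The main obstacle is the second paragraph: proving that the pivots form a decreasing set, and hence are few, and that deleting them destroys every copy of $\alpha\oplus 1\oplus\beta$. Both facts are precisely where the avoidance of the doubled singleton $\alpha\oplus 1\oplus 1\oplus\beta$ is exploited, and together they serve as the replacement for the staircase machinery needed in the $k=2$ arguments; once they are in hand, the bounded-merge count and the appeal to Theorem~\ref{THM Merge Growth} are routine.
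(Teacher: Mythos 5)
Your proof is correct and is essentially the paper's own argument: the paper likewise considers exactly the points having an $\alpha$ below--left and a $\beta$ above--right, observes that no two of them can form a $12$ (else $\alpha \oplus 1 \oplus 1 \oplus \beta$ would appear), so that they form a descending chain of at most $k$ points since $\pi$ avoids $\delta_{k+1}$, and concludes that $\pi$ is a bounded merge of a permutation avoiding $\alpha \oplus 1 \oplus \beta$ with that short chain. The only (cosmetic) difference is that the paper treats the pivot set simply as a permutation of length at most $k$ --- merging $\C$ with a finite class, which by Theorem~\ref{THM Merge Growth} contributes nothing to the growth rate --- rather than asserting it lies in $\C$; this sidesteps the degenerate case $\alpha=\beta=\emptyset$, where your claim that $M$ avoids $\alpha\oplus 1\oplus\beta$ because that pattern contains a $12$ fails (though the proposition is trivial there).
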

%

\begin{proof}
As usual, consider those $\pi \in \I_k$ which avoid $\alpha \oplus 1 \oplus 1 \oplus \beta$ but involve $\alpha \oplus 1 \oplus \beta$. Consider all the elements $x$ of $\pi$ which have an $\alpha$ below and to their left, and a $\beta$ above and to their right. No two of these can form a $12$ pattern or else we would obtain a copy of $\alpha \oplus 1 \oplus 1 \oplus \beta$. Thus they form a descending chain, but in particular there can be at most $k$ of them. So $\pi$ is a bounded merge of a permutation avoiding $\alpha \oplus 1 \oplus \beta$ (as well as $\delta_{k+1}$) and a permutation of length at most $k$, which is all that we require.
\end{proof}

Applying this proposition repeatedly we can partially reduce the elements of any basis set of a class of this type without changing its growth rate, where by \emph{partial reduction} we mean replacing multiple consecutive articulation points by a single one.

As is well known, the class $\I_2$ is enumerated by the Catalan numbers. If we denote its generating function by $c$, and let $r$ denote the generating function of the rigid permutations in $\I_2$ (including the empty permutation), then the decomposition of an arbitrary $\pi \in \I_2$ used to define the rigid reduction shows that:
\[
c = \frac{r}{1-tr}.
\]
Therefore, 
\begin{eqnarray*}
c &=& \frac{1 - \sqrt{1-4t}}{2t} \\
r &=& \frac{1 + 2t - \sqrt{1 - 4t}}{2t(t+2)}.
\end{eqnarray*}
Then, the elementary estimates referred to in Example {\bf IV.2} (page 228) of \cite{FS09} applied to both $c$ and to $r$ yield:
 
\begin{proposition}
Asymptotically, $4/9$ of the permutations in $\I_2$ are 2-rigid. 
\end{proposition}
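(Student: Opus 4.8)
The quantity to compute is the limiting proportion $[t^n]\,r \,/\, [t^n]\,c$ as $n\to\infty$, and the plan is to obtain it by singularity analysis in the style of Example~{\bf IV.2} of \cite{FS09}. Both $c$ and $r$ are algebraic functions whose only branch point arises from the factor $\sqrt{1-4t}$; the remaining factors are rational. Hence each has a dominant singularity of square-root type at $t=1/4$. First I would confirm that $t=1/4$ really is the dominant singularity of each function and that each is analytic in a suitable $\Delta$-domain abutting it, so that the transfer theorem applies and the coefficient asymptotics are governed by the local expansion at $t=1/4$.

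Next I would read off the singular expansions. Writing
\[
c = \frac{1}{2t} - \frac{1}{2t}\sqrt{1-4t},
\qquad
r = \frac{1+2t}{2t(t+2)} - \frac{1}{2t(t+2)}\sqrt{1-4t},
\]
the first summand of each is rational and analytic at $t=1/4$, so it contributes nothing to the coefficient asymptotics there; the singular behaviour is carried entirely by the term $h(t)\sqrt{1-4t}$, with analytic prefactor $h_c(t)=-\tfrac{1}{2t}$ for $c$ and $h_r(t)=-\tfrac{1}{2t(t+2)}$ for $r$. The standard transfer estimate $[t^n](1-4t)^{1/2}\sim -\,4^n n^{-3/2}/(2\sqrt{\pi})$ then yields, to leading order (only the value of the prefactor at the singularity enters),
\[
[t^n]\,c \sim h_c(\tfrac14)\left(-\frac{4^n n^{-3/2}}{2\sqrt{\pi}}\right),
\qquad
[t^n]\,r \sim h_r(\tfrac14)\left(-\frac{4^n n^{-3/2}}{2\sqrt{\pi}}\right).
\]
As a sanity check, $h_c(1/4)=-2$ reproduces the familiar Catalan asymptotic $[t^n]\,c \sim 4^n/(\sqrt{\pi}\,n^{3/2})$.

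Finally, since the two asymptotics share the identical factor $-\,4^n n^{-3/2}/(2\sqrt{\pi})$, the limiting proportion collapses to a ratio of prefactors at the singularity:
\[
\lim_{n\to\infty}\frac{[t^n]\,r}{[t^n]\,c}
= \frac{h_r(1/4)}{h_c(1/4)}
= \left.\frac{1}{t+2}\right|_{t=1/4}
= \frac{1}{9/4}
= \frac{4}{9},
\]
as claimed. The only genuine work lies in justifying the hypotheses of the transfer theorem rather than in any computation: I would need to check that $r$ has no pole at $t=0$ (the apparent $0/0$ there is removable, with $r(0)=1$, matching the empty permutation), that the factor $t+2$ in the denominator vanishes only at $t=-2$, well outside the closed disc of radius $1/4$, and that $t=1/4$ is the sole singularity on the circle $|t|=1/4$. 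Granting these routine analyticity checks, the square-root nature of the singularity is common to both functions and the value $4/9$ drops out as the ratio of the analytic prefactors evaluated at $t=1/4$.
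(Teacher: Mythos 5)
Your proposal is correct and takes essentially the same route as the paper: the paper's proof consists precisely of applying the ``elementary estimates'' of Example \textbf{IV.2} of \cite{FS09} (i.e.\ square-root singularity analysis at $t=1/4$) to both $c$ and $r$, which is exactly the transfer-theorem computation you carry out explicitly. Your evaluation of the ratio of analytic prefactors, $h_r(1/4)/h_c(1/4) = 1/(t+2)\big|_{t=1/4} = 4/9$, fills in the detail that the paper leaves implicit.
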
	

This provides rather slim grounds on which to make the following:

\begin{conjecture}
Asymptotically, a positive proportion of the permutations in $\I_k$ are $k$-rigid.
\end{conjecture}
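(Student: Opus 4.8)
The plan is to reduce the conjecture to a comparison of coefficient asymptotics and then to show that the rigid count carries the same polynomial correction as the full class. Write $f_n = |\I_k \cap \S_n|$ and let $r_n$ be the number of $k$-rigid permutations of length $n$. Since $\I_k = \avoid(\delta_{k+1})$, the RSK correspondence gives $f_n = \sum_{\ell(\lambda)\le k}(f^\lambda)^2$, and Regev's asymptotic evaluation of this sum yields $f_n \sim C_k\,(k^2)^n\,n^{-(k^2-1)/2}$ for an explicit positive constant $C_k$ (for $k=2$ this is the Catalan asymptotic $4^n n^{-3/2}/\sqrt{\pi}$, matching the exponent $-(2^2-1)/2 = -3/2$). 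The statement ``a positive proportion are $k$-rigid'' is exactly $\liminf_n r_n/f_n > 0$, which, given Regev, is equivalent to $r_n \asymp (k^2)^n n^{-(k^2-1)/2}$: the rigid permutations must retain both the exponential rate $k^2$ \emph{and} the polynomial exponent $-(k^2-1)/2$, losing only in the multiplicative constant. So the entire problem is to show that the saturation condition ``every point lies in a $\delta_k$'' costs at most a bounded factor, rather than a polynomial or subexponential one.

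The tractable half of the argument, which replaces the $\oplus$-decomposition used for $k=2$, is a skew-sum decomposition. The key structural lemma I would establish is: for $\rho = \sigma \ominus \tau$ with $a = \ell$-decreasing-length of $\sigma$ and $b$ that of $\tau$, a point $x \in \sigma$ lies in a $\delta_k$ of $\rho$ \emph{if and only if} it lies in a $\delta_a$ of $\sigma$ (the longest decreasing chain through $x$ in $\rho$ equals the longest such chain in $\sigma$, plus $b$). Hence $\sigma \ominus \tau$ is $k$-rigid exactly when $\sigma$ is $a$-rigid, $\tau$ is $b$-rigid and $a+b=k$. Iterating, every $k$-rigid permutation factors uniquely as a skew sum $\sigma_1 \ominus \cdots \ominus \sigma_j$ of $\ominus$-indecomposable pieces with $\sum_i a_i = k$, each $\sigma_i$ being $a_i$-rigid. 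Letting $\hat g_a$ be the generating function of $\ominus$-indecomposable $a$-rigid permutations, this gives
\[
R_k \;=\; \sum_{\substack{a_1+\cdots+a_j=k\\ a_i\ge 1}} \hat g_{a_1}\cdots \hat g_{a_j},
\]
and an entirely parallel skew-decomposition of $\I_k$ (with pieces of decreasing-length summing to at most $k$, and no rigidity imposed). Singularity analysis of these convolutions then transfers the question from the full classes to their $\ominus$-indecomposable pieces, and the dominant singularity comes from the term using the full budget $a=k$. Thus the gluing is handled, and the problem isolates to a single comparison: the proportion of $\ominus$-indecomposable $\I_k$ permutations of decreasing-length exactly $k$ that are $k$-rigid.

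That last comparison is the genuine obstacle, and it is the bulk phenomenon the paper flags as missing for $k\ge 3$. By Greene's theorem $\lambda_1'+\cdots+\lambda_i'$ is the maximal size of a union of $i$ decreasing subsequences, so $\ell(\lambda)=k$ forces the longest decreasing chain to have length exactly $k$, and $k$-rigidity becomes the saturation condition that \emph{every} point lies in one of these maximal chains. Crucially, Greene's theorem controls the \emph{sizes} of such unions but does not tell us which points are saturated, so rigidity is not read off from the shape $\lambda$ alone. The dominant contribution to $f_n$ comes from near-balanced shapes with $k$ rows of length $\approx n/k$; for such shapes a uniformly random permutation should be a generic interleaving of $k$ long increasing sequences in which only a boundary set of ``unsaturated'' points (heuristically of size $O(\sqrt n)$) fails to sit in a $\delta_k$. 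The technical heart would be to show, using the local statistics of a random element of $\I_k$ near the edges of its $k$-row shape, that with probability bounded away from $0$ there are \emph{no} unsaturated points — or, more robustly, that any unsaturated points can be removed by a bounded, boundedly-to-one local modification that stays within $\I_k$ and preserves the length, yielding an injection from a positive-density subset of $\I_k \cap \S_n$ into the $k$-rigid permutations of the same length.

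I expect controlling these unsaturated points to be the real difficulty. Bounding their expected number by $o(n)$ is not enough: one needs the probability of having none (or of a successful repair) to stay bounded below, which requires fine control of the edge behaviour that the absence of a ``generic staircase'' for $k\ge 3$ makes delicate. A natural fallback, proving strictly less than the conjecture, is to exhibit an explicit exponentially large family of $k$-rigid permutations and show via the convolution above that the rigid class has exponential growth rate exactly $k^2$; upgrading this from equality of growth rates to a positive proportion is precisely the step where the polynomial exponent $-(k^2-1)/2$ must be shown to survive, and that is where the content of the conjecture resides.
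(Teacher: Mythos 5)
You have attempted to prove a statement that the paper itself does not prove: it is stated as a \emph{conjecture}, supported only by the case $k=2$ (the proposition that asymptotically $4/9$ of the permutations in $\I_2$ are $2$-rigid), which the authors obtain by a purely algebraic route --- the rigid reduction ($\oplus$-decomposition around articulation points) gives $c = r/(1-tr)$, and singularity analysis of $c$ and $r$ at $t=1/4$ yields the constant $4/9$. So there is no proof in the paper to compare against, and, more to the point, your proposal is not a proof either. Your preparatory reductions are sound: Regev's asymptotics $f_n \sim C_k (k^2)^n n^{-(k^2-1)/2}$ correctly translate ``positive proportion'' into ``the rigid count retains both the exponential rate $k^2$ and the polynomial exponent $-(k^2-1)/2$''; your skew-sum lemma (a point of $\sigma$ in $\sigma \ominus \tau$ lies in a $\delta_{a+b}$ of the skew sum if and only if it lies in a $\delta_a$ of $\sigma$) is correct and gives the valid factorization of $k$-rigid permutations into $\ominus$-indecomposable $a_i$-rigid pieces with $\sum a_i = k$; and since every term of the convolution other than the full-budget one has growth rate at most $(k-1)^2$, the reduction to $\ominus$-indecomposable pieces of decreasing length exactly $k$ is legitimate. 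This bookkeeping is a genuine generalization of the paper's $k=2$ machinery (note that you use the complementary decomposition: the paper glues along $\oplus$, you factor along $\ominus$).

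The gap is the one you yourself flag, and it is not a technicality --- it is the entire content of the conjecture. After your reductions, what remains is to show that among $\ominus$-indecomposable elements of $\I_k$ with longest decreasing subsequence exactly $k$, the saturation condition ``every point lies in some $\delta_k$'' holds with probability bounded away from zero, uniformly in $n$; equivalently, that imposing saturation costs only a bounded multiplicative factor and in particular preserves the exponent $-(k^2-1)/2$. You offer a heuristic (only $O(\sqrt n)$ boundary points should be unsaturated) and two possible strategies (a direct lower bound on the probability of having no unsaturated points, or a boundedly-to-one length-preserving repair injection from a positive-density subset of $\I_k \cap \S_n$ into the rigid permutations), but neither is constructed, and you correctly observe that an expected-value bound of $o(n)$ unsaturated points would not suffice. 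Your fallback --- exhibiting exponentially many $k$-rigid permutations to get equality of growth rates --- proves strictly less than the conjecture, since growth rates are insensitive to polynomial factors, which is exactly where the difficulty lives. So what you have is a correct reduction plus a research plan; the statement remains, as in the paper, open for $k \geq 3$.
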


\bibliographystyle{acm}
\bibliography{combinatorics}

\end{document}